\providecommand{\U}[1]{\protect\rule{.1in}{.1in}}
\providecommand{\U}[1]{\protect\rule{.1in}{.1in}}
\newtheorem{theorem}{Theorem}[section]
\newtheorem{corollary}[theorem]{Corollary}
\newtheorem{definition}[theorem]{Definition}
\newtheorem{lemma}[theorem]{Lemma}
\newtheorem{proposition}[theorem]{Proposition}
\numberwithin{equation}{section}
\begin{document}
\title[Compactification of Hamiltonian stationary Lagrangian submanifolds]{Compactification of the space of Hamiltonian stationary Lagrangian
submanifolds with bounded total extrinsic curvature and volume}
\author{Jingyi Chen}
\address{Department of Mathematics\\
The University of British Columbia\\
Vancouver, B.C. V6T1Z2, Canada}
\email{jychen@math.ubc.ca}
\author{Micah Warren}
\address{Department of Mathematics\\
University of Oregon, Eugene, OR 97403, U.S.A.}
\email{micahw@uoregon.edu}
\thanks{The first author was partially supported by an NSERC Discovery Grant
(22R80062) and a grant (No. 562829) from the Simons Foundation. }
\date{}

\begin{abstract}
For a sequence of immersed connected closed Hamiltonian stationary Lagrangian
submaniolds in $\mathbb{C}^{n}$ with uniform bounds on their volumes and the
total extrinsic curvatures, we prove that a subsequence converges either to a
point or to a Hamiltonian stationary Lagrangian $n$-varifold locally uniformly
in $C^{k}$ for any nonnegative integer $k$ away from a finite set of points,
and the limit is Hamiltonian stationary in ${\mathbb{C}}^{n}$. We also obtain
a theorem on extending Hamiltonian stationary Lagrangian submanifolds $L$
across a compact set $N$ of Hausdorff codimension at least 2 that is locally
noncollapsing in volumes matching its Hausdorff dimension, provided the mean
curvature of $L$ is in $L^{n}$ and a condition on local volume of $L$ near $N$
is satisfied.

\end{abstract}
\maketitle

\section{Introduction}

Compactness of stationary points of the volume functional, possibly under
various constraints, is useful in studying existence and regularity of the
critical points and their moduli space. For compactness of minimal surfaces,
Choi-Schoen demonstrated in their well-known work \cite{CSc} that Simons'
identity for the Laplacian of the second fundamental form \cite{Simons} can be
used to derive curvature estimates when the total extrinsic curvature over a
ball is small and then obtained higher order curvature estimates. This
influential technique now becomes standard when the Euler-Lagrange equation of
the volume related variational problem is of second order.


Hamiltonian stationary Lagrangian submanifolds in $\mathbb{C}^{n}$ are
critical points of the volume functional under Hamiltonian variations
$X=J\hbox{D}f$ for any compactly supported smooth function $f$ on
$\mathbb{C}^{n}$ \cite{MR1202805}. Any smooth Lagrangian submanifold in
$\mathbb{C}^{n}$ can be locally defined by a graph over a region $\Omega$ in a
Lagrangian tangent plane, in the form
\[
\Gamma_{u}=\left\{  \left(  x,Du(x)\right)  :x\in\Omega\right\}
\]
for some $u\in C^{\infty}(\Omega)$. If the Lagrangian phase
\begin{equation}
\theta=\sum_{\lambda_{j}\text{ eigenvalues of }D^{2}u}\arctan\lambda_{j}\text{
}\label{theta}%
\end{equation}
is constant, then the Lagrangian submanifold is volume minimizing among all
submanifolds in the same homology class, as shown in \cite{HL}. If the phase
$\theta$ is harmonic on $\Gamma_{u}$, that is,
\begin{equation}
\Delta_{g}\theta=0\label{hseq}%
\end{equation}
where $\Delta_{g}$ is the Laplace-Beltrami operator on $\Gamma_{u}$ for the
induced metric $g$, then $\Gamma_{u}$ is Hamiltonian stationary, and vice
versa (cf. \cite{MR1202805}, \cite[Proposition 2.2]{SW}). Equation
\eqref{hseq} is a fourth order nonlinear elliptic equation for the potential
function $u$. An important feature of the fourth order operator is its
decomposition into two second order elliptic operators, and this is the basis
for our curvature estimate and smoothness estimates, as already used in our
regularity theory on Hamiltonian stationary Lagrangian submanifolds
\cite{ChenWarren}.

In this paper, we prove a compactness result for closed immersed Hamiltonian
stationary Lagrangian submanifolds of $\mathbb{C}^{n}$ with uniform bound on
volume and total extrinsic curvature, namely, the $L^{n}$-norm of the second
fundamental form. For any sequence of such submanifolds, we show that a
subsequence converges, locally uniformly in every $C^{k}$-norm away from a
finite set of points, to an integral varifold which is Hamiltonian stationary
in an appropriate sense. So we can compactify the space of these submanifolds
by including Hamiltonian stationary integral $n$-varifolds with only point
singularities (immersed elsewhere) and the number of the singular points
bounded by a constant depending only on the upper bound of the total extrinsic
curvature. It is possible that the sequence converges to a point, such as
shrinking circles in the plane. This can be excluded by scaling volume to one,
while the total extrinsic curvature and being Hamiltonian stationary
Lagrangian are both scaling invariant, although the Hamiltonian isotopy
classes may change.


\begin{theorem}
\label{conv_thm} Suppose that $\{L_{i}\}$ is a sequence of connected
Lagrangian Hamiltonian stationary closed (compact without boundary) immersed
submanifolds of $\mathbb{C}^{n}$ with $0\in L_{i}$ and
\[
Volume(L_{i})<C_{1}\,\,\,\hbox{and}\,\,\int_{L_{i}}\left\vert A\right\vert
^{n}\emph{d}\mu_{L_{i}}<C_{2}.
\]
Then $L_{i}\subset B_{R_{0}}(0)\subset\mathbb{R}^{2n}$ for some $R_{0}%
(n,C_{1},C_{2})$. Moreover, there exists a subsequence of $\{L_{i}\}$ that
either converges to a point, or converges to a Hamiltonian stationary
Lagrangian varifold on $\mathbb{C}^{n}\backslash S$ for some finite set $S$ in
the $C^{k}$ topology on any compact subset of $B_{R_{0}}(0)\backslash S$. The
limiting varifold is supported, possibly with multiplicity, on an immersed
submanifold $L$. The closure $\overline{L}$ is Hamiltonian stationary in
$\mathbb{C}^{n}$ in the sense that the generalized mean curvature
$\mathcal{H}$ of the varifold $(\overline{L},\mu_{L})$ on $\mathbb{R}^{2n}$
exists and satisfies
\begin{equation}
\int_{\mathbb{R}^{2n}}\langle J\hbox{D}f,\mathcal{H}\rangle\,\emph{d}\mu
_{L}=0\label{HSInt}%
\end{equation}
for any $f\in C_{0}^{\infty}(\mathbb{R}^{2n})$. Also, $\overline{L}$ is connected.
\end{theorem}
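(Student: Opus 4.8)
The plan is to run the Choi--Schoen scheme, with the second-order Simons-type input replaced by the factorization of the fourth-order equation \eqref{hseq} into two second-order elliptic equations, as in \cite{ChenWarren}.

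\emph{Confinement and the singular set.} Since $H=\mathrm{tr}_gA$ we have $|H|\le\sqrt n\,|A|$ pointwise, so by H\"older $\int_{L_i}|H|^{n-1}\,d\mu_{L_i}\le n^{(n-1)/2}\big(\int_{L_i}|A|^n\big)^{(n-1)/n}\mathrm{Vol}(L_i)^{1/n}\le c(n,C_1,C_2)$. A diameter estimate of the form $\mathrm{diam}(\Sigma)\le c(n)\int_\Sigma|H|^{n-1}\,d\mu$ for closed connected immersed $\Sigma^n\subset\mathbb R^N$ then gives $\mathrm{diam}(L_i)\le R_0(n,C_1,C_2)$, and since $0\in L_i$ this yields $L_i\subset B_{R_0}(0)$. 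Next one establishes an $\varepsilon$-regularity theorem: there is $\varepsilon_0(n)>0$ such that if $L$ is Hamiltonian stationary Lagrangian in $B_r(p)$ with $\int_{L\cap B_r(p)}|A|^n<\varepsilon_0$, then $\sup_{B_{r/2}(p)}|A|\le c\,r^{-1}\big(\int_{L\cap B_r(p)}|A|^n\big)^{1/n}$ and $\sup_{B_{r/2}(p)}|\nabla^kA|\le c_k\,r^{-1-k}$ for every $k$; this follows by treating \eqref{hseq} as the coupled pair consisting of $\Delta_g\theta=0$ and the second-order equation for the potential, and bootstrapping elliptic estimates, as in \cite{ChenWarren}. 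Passing to a subsequence, the measures $|A_i|^n\,d\mu_{L_i}$ (of mass $<C_2$, supported in $B_{R_0}(0)$) converge weakly to a measure of mass $\le C_2$; let $S$ be its set of atoms of mass $\ge\varepsilon_0$, so that $\#S\le C_2/\varepsilon_0$. For every $p\notin S$ there is $\rho>0$ with $\int_{L_i\cap B_\rho(p)}|A_i|^n<\varepsilon_0$ for all large $i$.

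\emph{Subconvergence.} View $L_i$ as integral $n$-varifolds $V_i$ with mass $<C_1$ and $\|\delta V_i\|(\mathbb R^{2n})\le C_1^{(n-1)/n}\|H_i\|_{L^n}\le c(n,C_2)$. By Allard's compactness theorem a subsequence converges to an integral varifold $V$ with locally bounded first variation, supported in $\overline{B_{R_0}(0)}$. On $B_{R_0}(0)\setminus S$ the $\varepsilon$-regularity gives uniform local bounds on $|A_i|$ and all $|\nabla^kA_i|$, while the monotonicity formula together with the mass bound bounds the number of sheets of $L_i$ through a fixed small ball; hence near every $p\notin S$ each $L_i$ is a bounded number of graphs with uniformly bounded $C^{k+1}$ norms, and Arzel\`a--Ascoli produces, after a further subsequence, $C^k$-convergence for all $k$ to a smooth immersed Lagrangian $L\subset B_{R_0}(0)\setminus S$ solving \eqref{hseq}, with $V$ the corresponding varifold with integer multiplicity there. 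If $\mathrm{diam}(L_i)\to0$ along a subsequence, then $\mathrm{spt}\|V\|=\{0\}$ and the sequence converges to the point $0\in L_i$; otherwise $\mathrm{diam}(L_i)\ge\delta_0>0$ and the monotonicity formula gives $\mathrm{Vol}(L_i)\ge c(\delta_0)>0$, so $V$ is nontrivial. Write $\overline L=\mathrm{spt}\|V\|$.

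\emph{Removability at $S$, stationarity, connectedness.} Let $H$ be the classical mean curvature of $L$ on $B_{R_0}(0)\setminus S$. The $C^k$-convergence and lower semicontinuity give $\int|H|^n\,d\mu_L\le n^{n/2}C_2$, so $H\in L^n(\mu_L)$ and $|H|\in L^1(\mu_L)$. Since $\mu_L(B_\rho(p_j))\to0$ as $\rho\to0$ for each $p_j\in S$, testing $\delta V_i\rightharpoonup\delta V$ against vector fields supported in shrinking balls about the $p_j$ and using H\"older with the uniform bound on $\|H_i\|_{L^n}$ shows that the part of $\delta V$ concentrated on $S$ vanishes; hence $\delta V(X)=-\int\langle H,X\rangle\,d\mu_L$ for all $X\in C^\infty_0(\mathbb R^{2n};\mathbb R^{2n})$, so the generalized mean curvature $\mathcal H=H$ exists. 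For \eqref{HSInt}, a short computation using that $L$ is Lagrangian, namely $(J\mathrm{D}f)^\perp=J\nabla^L(f|_L)$ and $J\mathcal H=\nabla^L\theta$ up to sign, gives $\langle J\mathrm{D}f,\mathcal H\rangle=-\langle\nabla^L(f|_L),\nabla^L\theta\rangle$ on $L\setminus S$ (up to the same sign); choose $\rho_j$ small enough that $\int_{B_{\rho_j}(p_j)}|\mathcal H|^n\,d\mu_L$ lies below the Michael--Simon monotonicity threshold, deduce the density bound $\mu_L(B_\rho(p_j))\le c\rho^n$, build cutoffs $\phi_\varepsilon$ vanishing near $S$ with $\int|\mathrm{D}\phi_\varepsilon|^{n/(n-1)}\,d\mu_L\to0$ (logarithmic cutoffs when $n=2$), integrate $\langle J\mathrm{D}(f\phi_\varepsilon),\mathcal H\rangle$ by parts using $\Delta_g\theta=0$, and let $\varepsilon\to0$. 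Finally, the uniform lower density bound for $V$ forces $\mathrm{spt}\|V_i\|\to\overline L$ in local Hausdorff distance, so if $\overline L$ were the disjoint union of two nonempty closed sets at positive distance the connected $L_i$ would eventually be disconnected too; hence $\overline L$ is connected.

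\emph{Main difficulty.} The crux is the $\varepsilon$-regularity with \emph{all} higher-order derivative estimates for the fourth-order operator, which is precisely where the splitting of \eqref{hseq} into two second-order equations is essential and which drives the smooth convergence off $S$. The second delicate point is the analysis at $S$: showing that neither mass nor first variation escapes there and that the weak Hamiltonian stationary equation passes across $S$; the borderline nature of the $L^n$ curvature hypothesis (a point has zero $n$-capacity only for $n\ge3$) forces the logarithmic-cutoff refinement when $n=2$, while $n=1$ degenerates to special Lagrangian curves and is handled directly.
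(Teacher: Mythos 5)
Your proposal is essentially correct, but it follows a genuinely different path from the paper in several key places, so a comparison is worthwhile.

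For \emph{confinement}, you invoke Topping's diameter estimate $\mathrm{diam}(\Sigma)\le c(n)\int_\Sigma|H|^{n-1}$ and H\"older's inequality. The paper instead runs a Wiener covering argument: each ball of radius $1$ centered on $L_i$ either has small total curvature (hence a definite volume lower bound via graphicality) or large total curvature (hence there are few such balls), and the total bound on the number of covering balls gives confinement. Both work; your route is shorter but borrows an external theorem, while the paper's is self-contained and, as a by-product, gives exactly the sheet-number control that is needed later. For the \emph{singular set and subconvergence}, you lean on Allard's compactness theorem for varifolds plus monotonicity, whereas the paper constructs explicit nested exhaustions $U_l$, extracts convergent subsequences of graph representations directly, and counts sheets by the volume/graphicality argument rather than by density. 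Again both are valid; the paper's route avoids any appeal to Allard. For \emph{removability at} $S$ and stationarity of $\overline L$, you reprove a cutoff lemma from scratch (including the logarithmic refinement for $n=2$), while the paper proves a more general extension theorem (Theorem~\ref{ext_thm}) for sets $N$ of codimension $\ge 2$ under a noncollapsing hypothesis, together with Proposition~\ref{points} (which uses the Kuwert--Sch\"atzle monotonicity inequality precisely to handle $n=2$), and then invokes this machinery with $N=S$ a finite set; the paper's approach yields a removable-singularity theorem that stands alone, and your direct computation is a special case of it. Your connectedness argument, via local Hausdorff convergence of supports, is close in spirit to the paper's, which constructs a separating smooth function and produces a contradictory intermediate-value point away from $S$.

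Two small caveats. First, you state the $\varepsilon$-regularity in the scale-invariant sharp form $\sup_{B_{r/2}}|A|\le c\,r^{-1}\big(\int_{B_r}|A|^n\big)^{1/n}$; the paper's Proposition~\ref{e-regularity} only proves the weaker absolute form $\sigma^2|A|^2\le(\pi/24)^2$ on $B_{r_0-\sigma}$, which is all that is used. The Choi--Schoen point-picking that produces the sharp form in the minimal case relies on Simons' inequality, which is unavailable here, so you should either settle for the absolute bound (as the paper does) or justify the sharp version separately. Second, when you say the $\varepsilon$-regularity with all higher derivative bounds ``follows by bootstrapping,'' you are condensing the nontrivial Lewy--Yuan rotation step (Proposition~\ref{r1}) that places $D^2u$ into a region where the Lagrangian phase operator is uniformly concave, which is what makes the Caffarelli--Cabr\'e $C^{2,\alpha}$ estimate applicable; without that rotation the bootstrap does not start. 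Neither caveat derails your argument, since the weaker estimates still carry the proof through, but they are the hard technical cores that must be spelled out.
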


We also obtain an extendibility result in Theorem \ref{N} which asserts that a
properly immersed Lagrangian submanifold $L$ that is Hamiltonian stationary in
${\mathbb{C}}^{n}\backslash N$ (i.e. for Hamiltonian vector fields supported
away from $N$) is Hamiltonian stationary in ${\mathbb{C}}^{n}$ (i.e. for all
compactly supported Hamiltonian vector fields), provided $N$ is a compact set
with finite $k$-dimensional Hausdorff measure which is locally $k$%
-noncollapsing, $k\leq n-2$, and the volume of $L\cap B_{r}(x)$ for $x\in N$
is dominated by a power of $r$ involving $n,k$. Local control on volume is
important for extension problems; our consideration is inspired by those for
extending minimal varieties (general dimension and codimension) across small
closed sets in \cite[Theorem 5.1, 5.2]{Harvey-Lawson-70}, also see
\cite{Chen-Li}. A special case of Theorem \ref{N}, namely, when $N$ is a
finite set of points, is used in concluding the limiting varifold in Theorem
\ref{conv_thm} is Hamiltonian stationary in $\mathbb{C}^{n}$. A removable
singularity theorem for Hamiltonian stationary Lagrangian \textit{graphs} was
proven in \cite{ChenWarren} under a weaker assumption.

There are two natural ways to give an immersed submanifold a varifold
structure. Denoting by $\mathscr{H}^{k}$ the $k$-dimensional Hausdorff
measure,\ \ any $\mathscr H^{k}$-measurable and rectifiable subset of
$\mathbb{R}^{2n}$ is associated with a varifold naturally \cite[p. 61]{Pitts},
by restricting the $\mathscr{H}^{k}$ measure to each approximate tangent
space. This takes into consideration only the point set of the image.
Meanwhile, the image of an immersion $\iota:M^{k}\rightarrow\mathbb{R}^{2n}$
is also associated naturally to a varifold by pushing forward the pulled-back
$\mathscr{H}^{k}$ measure. These two definitions differ only if the immersion
fails to be injective on non-negligible set. Here we take our sequence
$\{L_{i}\}$ to be smooth immersions, which puts us in the latter setting. This
latter definition may be more natural when studying sequences, flows or moduli
spaces of submanifolds, as it has the feature that the limit does not lose
mass as a varifold, so the weak convergence in the varifold topology is
faithful to the convergence in other natural topologies (for example $L^{2}$
or $W^{1,2}$ induced length-metric, see \cite[Section II]{R17}) that one may
place on a space of differentiable submanifolds. However, due to the
analyticity of solutions to \eqref{hseq}, we find (Proposition \ref{canonical}%
) that any Hamiltonian stationary immersion from a compact connected manifold
can be passed to a quotient such that the two varifold definitions agree.
\ While this reduction is always possible for smoothly immersed Hamiltonian
stationary Lagrangian submanifolds, we cannot rule out that the limiting
object may have different multiplicities at different points. Theorem 1.1 is
to be interpreted with this in mind, see Definition \ref{HsLV}.

It is illustrative to consider the 1-dimensional case. It is known that a
smooth curve in $\mathbb{C}$ is always Lagrangian and its Lagrangian phase
function is harmonic if and only if it is part of a straight line or a circle.
We address the codimension condition in our extension result and regularity on
the immersion for the compactness from a viewpoint based on the first
variation of 1-varifolds:

\begin{enumerate}
\item If $\iota:M^{1}\rightarrow\mathbb{C}$ is a Hamiltonian stationary
immersion where $M^{1}$ is compact, then its image $\iota(M^{1})$ is a circle.
The radius is uniformly bounded above from the length bound, although not
below. Thus, it is easy to see that Theorem \ref{conv_thm} is true for $n=1$.

\item Let $\gamma$ be the union of the rays $\gamma_{i}=t\vec{\eta_{i}}$ where
$\eta_{i}$ are unit linearly independent vectors in $\mathbb{R}^{2}%
,i=1,...,\ell$. Assign a multiplicity $m_{i}\in\mathbb{N}$ to $\gamma_{i}$. So
$(\gamma,\mu)$ is a 1-varifold for the measure $\mu=\sum_{i=1}^{\ell}%
m_{i}\,\mbox{d}\mu_{i}$ where $\mbox{d}\mu_{i}$ is the Euclidean length
element of $\gamma_{i}$. The first variation of $\gamma$ is given by
\[
\delta\gamma(J\hbox{D}f)=\langle J\hbox{D}f(0,0),\sum_{i=1}^{\ell}m_{i}%
\vec{\eta_{i}}\rangle
\]
for any $f\in C_{0}^{\infty}(\mathbb{R}^{2})$. In general the first variation
cannot be zero for arbitrary $f$, hence $\gamma$ is not Hamiltonian
stationary, unless a balancing condition $\sum_{i=1}^{\ell}m_{i}\vec{\eta_{i}%
}=0$ is prescribed.

Two points to make: first, the codimension requirement in Theorem
\ref{ext_thm} cannot be removed, i.e. we cannot expect to extend solutions
across a codimension one set in general. Second, a generic polygonal curve is
Hamiltonian stationary in $\mathbb{C}\backslash\{\mbox{vertices}\}$ but not
Hamiltonian stationary in $\mathbb{C}$, enhancing our first point; and the
lack of $C^{1,1}$ control on the potential function shows that the immersions
in Theorem \ref{conv_thm} need to be at least $C^{1}$ in order to appeal to
the regularity theory in \cite{ChenWarren}, which plays a crucial role in the
current paper.

\item While the map $\iota:\mathbb{S}^{1}\rightarrow\mathbb{C}$ given by
$\iota(z)=z$ provides an obvious immersion, there are many more: any map
$z\rightarrow z^{m}$, for $m$ a positive integer gives a Hamiltonian
stationary immersion, with the varifold a multiple of the varifold defined by
$\iota.$ \ We show that while all such maps define Hamiltonian stationary
varifolds (Proposition \ref{L=M}), the varifolds can be represented by a
\textquotedblleft canonical" immersion; see Proposition \ref{canonical}.

\item Consider $M^{1}=\mathbb{S}^{1}\sqcup\mathbb{S}^{1}$ and a sequence of
immersions $\iota_{k}:M^{1}$ $\rightarrow\mathbb{C}$ such that the image is a
pair of concentric circles with radii $1$ and $1+\frac{1}{k}.$ \ The limiting
object in the varifold topology will be a double copy of the unit circle, in
particular will have measure $2\mathscr{H}^{1}.$ \ If we \textquotedblleft
forget" the limiting immersion and consider only the point set, the object
will not be the limit in the varifold topology. \ While we avoid discussing
disconnected source manifolds in the current paper, this example suggests the
weighted definition of varifolds is more flexible in a broad setting. \ 
\end{enumerate}

\vspace{0.2cm}

We now outline the structure of the paper:

\vspace{.2cm}

In section 2, we set up basic framework for dealing with properly immersed
Hamiltonian stationary Lagrangian submanifolds. In particular, for a proper
Lagrangian immersion $\iota:M\rightarrow\mathbb{C}^{n}$, we show equivalence
of $L=\iota(M)$ being Hamiltonian stationary (seemingly weaker due to
non-embedded points) and the local embedding being Hamiltonian stationary.
This leads to the definition of Hamiltonian stationary varifolds which fits
naturally in convergence of a sequence of immersed ones. An immersed
Hamiltonian stationary Lagrangian submanifold defines a varifold in a natural
way, and these objects are compact in the space of varifolds. In later
sections we show this compactness is strong enough to retain the Hamiltonian
stationary Lagrangian property. We show in Proposition \ref{canonical} that if
a point set $L$ is the image of \emph{any} differentiable Hamiltonian
stationary immersion from a connected closed manifold $M,$ then there is a
canonical choice of manifold $\tilde M$ and immersion $\tilde{\iota}:\tilde M
\rightarrow\mathbb{C}^{n}$ such that $\tilde{\iota}(\tilde{M})=L$ with a
varifold structure such that the measure associated to $L$ will generically be
the $n$-dimensional Hausdorff measure $\mathscr{H}^{n}$. This structure result
relies heavily on analytic continuation arguments in Proposition \ref{L=M}
that follow from results in \cite{ChenWarren}.

In section 3, we derive curvature and smoothness estimates. For the
Hamiltonian stationary system we must work around the lack of some important
tools available in the minimal submanifold setting. While an approximate
monotonicity formula (for contact stationary surfaces) has been shown
(\cite[Section 3]{SWJDG}), this formula is considerably more complicated to
derive and less potent to apply than the corresponding formula often used in
the minimal surface case, which we clearly do not have. Simons' identity
(\cite{Simons}) plays an important role for minimal submanifolds in deriving
higher order estimates in terms of the second fundamental form $A$ and in
proving the $\varepsilon$-regularity (cf. \cite{CSc}, \cite{An}). However,
such a useful technique is not available for the Hamiltonian stationary case;
terms arising from $\nabla^{2}H$ in $\Delta_{g}|A|^{2}$ are not reduced to
lower order terms of $A$. Instead, we use a priori estimates for the potential
function $u$ by viewing \eqref{hseq} as a second order elliptic operator
$\Delta_{g}$ acting on the fully nonlinear second order elliptic operator
$\theta$ as in \cite{ChenWarren}. All this relies on, in an essential way,
writing $\theta$ as the summation of the arctangents as in \eqref{theta}. In a
general Calabi-Yau manifold $(M,\omega,J,\Omega)$ other than ${\mathbb{C}}%
^{n}$, the Lagrangian phase $\theta$ need not admit such an expression even as
a leading term, when writing the Lagrangian submanifold locally as a gradient
graph over its (Lagrangian) tangent space in the Darboux coordinates. The real
part of the nowhere vanishing holomorphic $n$-form $\Omega$ that defines
$\theta$ as a calibrating $n$-form does not necessarily take a simple form in
the Darboux coordinate system.

In section 4, we show in Theorem \ref{ext_thm} that a Hamiltonian stationary
Lagrangian submanifold away from a small set with Hausdorff codimension at
least 2, but locally non-collapsing in volume according to its Hausdorff
dimension, extends across the set as a Hamiltonian stationary varifold
provided its mean curvature $H$ is in $L^{n}$ and a volume condition near the
small set is satisfied. This volume condition follows directly from the
monotonicity formula if $H=0$, and it is also valid if the set is of isolated
points and $n\geq2$, see Proposition \ref{points}.

In section 5, we prove Theorem \ref{conv_thm}. The structure of the
convergence part in the proof is similar to that in \cite{CSc} and \cite{An}.
To show the limit is Hamiltonian stationary, we invoke our extension result
Theorem \ref{ext_thm}.


\section{Hamiltonian stationary immersions}

In this section we set up the basic framework for dealing with compact smooth
Lagrangian Hamiltonian stationary immersions.

We will need to deal with immersed submanifolds that may be non-embedded, so
we define the following.

\begin{definition}
\label{emb-comp} \emph{ Let $L$ be an immersed submanifold, given by
$\iota:M^{n}\rightarrow{\mathbb{R}}^{2n}$. Given any connected open set
$U\subset{\mathbb{R}}^{2n}$, decompose the inverse image into connected
components as
\[
\iota^{-1}(U)=\bigsqcup_{i}E_{i}.
\]
If $\iota$ restricted to each }$E_{i}$\emph{ is a smooth embedding into
$\mathbb{R}^{2n}$, then we say that each }%
\[
\Sigma_{i}={\iota\left(  E_{i}\right)  }%
\]
\emph{ is an \textit{embedded connected component }of $U\cap L$ and that
}$\iota$\emph{ \textit{splits into embedded components on $U$.} }
\end{definition}

\begin{proposition}
\label{locallyembedded} Let $\iota:M\rightarrow{\mathbb{R}}^{2n}$ be a proper
immersion of a smooth manifold $M$, and set $L=\iota(M)$. For any point $y\in
L$, there is an open ball $B_{r}^{2n}(y)$ such that $\iota$ splits into
embedded components on $B_{r}^{2n}(y)$, and each component $\Sigma_{i}$
contains $y$.
\end{proposition}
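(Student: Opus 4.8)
The plan is to localize around the preimages of $y$, using properness to see that the relevant ball meets only finitely many sheets of the immersion and the immersion property to make each sheet a graph. First I would observe that, since $\iota$ is proper, $\iota^{-1}(\overline{B_1^{2n}(y)})$ is compact, so $\iota^{-1}(y)$ is compact; and since an immersion is locally injective, $\iota^{-1}(y)$ is discrete, hence a finite nonempty set $\{p_1,\dots,p_m\}$. For each $j$ I would choose an open neighborhood $W_j\ni p_j$ on which $\iota$ is an embedding, with the $W_j$ pairwise disjoint, and then shrink $W_j$ so that, after an isometry of $\mathbb{R}^{2n}$ carrying $y$ to $0$, the image $\iota(W_j)$ is a graph $\{(x,h_j(x)):x\in\Omega_j\}$ over $T_y\iota(W_j)=\mathbb{R}^n\times\{0\}$, with $\Omega_j=B_{\delta_j}^n(0)$, $h_j(0)=0$, $Dh_j(0)=0$, and $|Dh_j|\le\tfrac12$ on $\Omega_j$.

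Two facts then finish the proof. First, for all sufficiently small $r$ one has $\iota^{-1}(B_r^{2n}(y))\subset\bigcup_j W_j$: otherwise there would be $q_k\in\iota^{-1}(B_{1/k}^{2n}(y))\setminus\bigcup_j W_j$, which for large $k$ lie in the compact set $\iota^{-1}(\overline{B_1^{2n}(y)})$; a subsequential limit $q_\infty$ lies in the closed set $M\setminus\bigcup_j W_j$, yet continuity forces $\iota(q_\infty)=y$, so $q_\infty\in\{p_1,\dots,p_m\}\subset\bigcup_j W_j$, a contradiction. Second, for all small $r$ and each $j$ the set $\iota^{-1}(B_r^{2n}(y))\cap W_j$ is connected and contains $p_j$: since $\iota|_{W_j}$ is a homeomorphism onto $\iota(W_j)$, this amounts to connectedness of $B_r^{2n}(y)\cap\iota(W_j)$, which is the graph of $h_j$ over $\{x\in\Omega_j:|x|^2+|h_j(x)|^2<r^2\}$; the bounds $h_j(0)=0$, $|Dh_j|\le\tfrac12$ give $|h_j(x)|\le\tfrac12|x|$ and $\langle\nabla(|x|^2+|h_j(x)|^2),x\rangle\ge\tfrac32|x|^2>0$ for $x\ne0$, so that sublevel set is star-shaped about $0$ (hence connected), and it stays inside $\Omega_j$ once $r<\delta_j$.

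Combining the two facts, for $r<\min\{1,\delta_1,\dots,\delta_m\}$ small enough that the first holds, the decomposition $\iota^{-1}(B_r^{2n}(y))=\bigsqcup_{j=1}^m\bigl(\iota^{-1}(B_r^{2n}(y))\cap W_j\bigr)$ is into disjoint connected open sets, so these are exactly the connected components $E_1,\dots,E_m$; each $\iota|_{E_j}$ is a smooth embedding, being the restriction of the embedding $\iota|_{W_j}$ to an open subset, and $y=\iota(p_j)\in\iota(E_j)=\Sigma_j$, which is the assertion. I expect the only step that is more than formal point-set topology to be the connectedness of $\iota^{-1}(B_r^{2n}(y))\cap W_j$: shrinking the ball could a priori split off extra components of the preimage that avoid $y$, and the elementary star-shapedness estimate is precisely what rules this out.
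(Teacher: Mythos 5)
Your argument is correct and follows essentially the same route as the paper: reduce via properness to a finite preimage $\{p_1,\dots,p_m\}$, pick pairwise-disjoint embedding neighborhoods, show by a limit/compactness argument that small balls around $y$ pull back into those neighborhoods, and finally show each local graph meets $B_r^{2n}(y)$ in a connected set. The one place you diverge is the connectedness step, where you argue directly that the sublevel set $\{|x|^2+|h_j(x)|^2<r^2\}$ is star-shaped about $0$ using the fixed bound $|Dh_j|\le\tfrac12$; the paper instead fixes a ray and runs an intermediate-value contradiction using that $\|DF\|_{L^\infty(B_\rho)}\to 0$ as $\rho\to 0$ — same underlying estimate, but your version is a touch cleaner.
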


\begin{proof}
For any fixed $y\in\iota(M)$, since $\iota$ is a proper immersion, the
pre-image of $y$ is a finite set $\iota^{-1}(\{y\})=\{x_{1},...,x_{m}\}$. Let
$B(x_{1}),...,B(x_{m})$ be disjoint coordinate balls (with respect to
arbitrary charts for $M)$ centered at $x_{i},$ with $\iota$ injective on each
$B(x_{i})$. Take a decreasing sequence $r_{k}\rightarrow0$. Let
\[
S_{k}=\iota^{-1}({B_{r_{k}}^{2n}(y)})\bigcap\left(  M\backslash\bigcup
_{i=1}^{m}B(x_{i})\right)  .
\]
Clearly, $S_{k+1}\subset S_{k}$. If there exists $x$ in all $S_{k}$ then
$\iota(x)=y$. So $x\in\{x_{1},...,x_{m}\}$, but this violates the definition
of $S_{k}$. Thus there is some $k_{0}$ such that $S_{k_{0}}=\emptyset$. Then
\[
\iota^{-1}(B_{r_{k_{0}}}^{2n}(y))\subseteq\bigcup_{i=1}^{m}B(x_{i})
\]
and this implies
\[
\iota(M)\cap B_{r_{k_{0}}}^{2n}(y)\subset\bigcup_{i=1}^{m}\iota(B(x_{i}))
\]
and then
\[
\iota(M)\cap B_{r_{k_{0}}}^{2n}(y)=B_{r_{k_{0}}}^{2n}(y)\bigcap\bigcup
_{i=1}^{m}\iota(B(x_{i}))=\bigcup_{i=1}^{m}\iota(B(x_{i}))\cap B_{r_{k_{0}}%
}^{2n}(y).
\]

We finish the proof by showing that $\iota(B(x_{i}))\cap B_{r}^{2n}(y)$ is
connected for all $r\geq r_{i}$ for some positive $r_{i}$ and then taking the
smallest $r_{i},i=1,...,m$. Represent $\iota(B(x_{i}))$ locally as a graph of
a vector valued function $F:B_{\rho}^{n}(0)\subset{\mathbb{R}}^{n}%
\rightarrow\mathbb{R}^{n}$, where we identify $T_{y}\iota(B(x_{i}))$ with
${\mathbb{R}}^{n}$ and $y$ with $0$; we further assume
$F(0)=0,\hbox{D}F(0)=0,|\mbox{D}F|\leq C(\rho)$ on $B_{\rho}^{n}(0)$. Then any
point $x$ with $(x,F(x))\in\partial B_{\rho}^{2n}(y)$ satisfies
\[
\rho^{2}=|x|^{2}+|F(x)|^{2}\leq(1+C_{\rho}^{2})|x|^{2}%
\]
therefore
\[
|x|\geq\frac{\rho}{\sqrt{1+C_{\rho}^{2}}}.
\]
If $\iota(B(x_{i}))\cap B_{\rho}^{2n}(y)$ is disconnected, there must be a
point $p\in\iota(B(x_{i}))\cap\partial B_{\rho}^{2n}(y)$ that is not on the
connected component containing $y$. On the ray $\sigma(t)=tx_{p}/|x_{p}|$ from
$0$ to $x_{p}$ in $B_{\rho}^{n}(0)$ where $p=(x_{p},F(x_{p}))$, there must be
two distinct points $\sigma(t_{1}),\sigma(t_{2})$ with
\[
\frac{\rho}{\sqrt{1+C_{\rho}^{2}}}\leq t_{1},t_{2}\leq\rho
\]
such that $t_{1}$ is the last departing time for $\iota(B(x_{i}))$ to leave
$B_{\rho}^{2n}(y)$ and $t_{2}$ is the first returning time. Thus, for the
smooth function
\[
f(t)=|x(t)|^{2}+|F(x(t))|^{2}%
\]
we have $f^{\prime}(t_{1})\geq0$ and $f^{\prime}(t_{2})\leq0$. So there is
$t_{0}\in\lbrack t_{1},t_{2}]$ with $f^{\prime}(t_{0})=0$, i.e.
\[
x(t_{0})\cdot\sigma^{\prime}(t_{0})+F(x(t_{0}))\cdot\mbox{D}F_{x(t_{0}%
)}(\sigma^{\prime}(t_{0}))=0.
\]
Since $\sigma^{\prime}(t_{0})$ is a unit vector, we have
\[
\frac{\rho}{\sqrt{1+C_{\rho}^{2}}}\leq|x(t_{0})|=|x(t_{0})\cdot\sigma^{\prime
}(t_{0})|\leq C_{\rho}|F(x(t_{0}))|\leq C_{\rho}\rho.
\]
But this becomes impossible for small $\rho$ since $C_{\rho}=\left\Vert
DF\right\Vert _{L^{\infty}(B_{\rho}(0))}\rightarrow0$, and we have a
contradiction. We conclude $\iota(B(x_{i}))\cap B_{\rho}^{2n}(y)$ must be connected.
\end{proof}

\begin{definition}
\emph{ \label{HsLV}Let $V$ be an integral rectifiable $k$-varifold on an open
subset $U$ of $\mathbb{C}^{n}$ with generalized mean curvature $\mathcal{H}$.
We say $V$ is \textit{Hamiltonian stationary in $U$} if
\begin{equation}
\int_{U}\langle J\,\hbox{D}f,\mathcal{H}\rangle\hbox{d}\mu_{V}%
=0\label{varform}%
\end{equation}
for any $f\in C_{0}^{\infty}(U)$. If $k=n$ and every approximate tangent space
$T_{x}V$ is a Lagrangian $n$-plane in $\mathbb{C}^{n}$, we say $V$ is a
\textit{Lagrangian varifold}. If a Lagrangian varifold is Hamiltonian
stationary, it is a \textit{Hamiltonian stationary Lagrangian $n$-varifold. }}
\end{definition}

\begin{definition}
\label{piHsL} \emph{Let $L$ be Hamiltonian stationary Lagrangian $n$-varifold
$L$ that is defined by a proper immersion $M^{n}\rightarrow\mathbb{C}^{n}$. We
say that} $L$ is a properly immersed Hamiltonian stationary Lagrangian
submanifold in $\mathbb{C}^{n}.$
\end{definition}

\begin{proposition}
\label{L=M} Let $\iota:M^{n}\rightarrow\mathbb{C}^{n}$ define a properly
immersed Hamiltonian stationary Lagrangian submanifold $L$ in $\mathbb{C}^{n}$
with $M$ connected. Then

\begin{enumerate}
\item The Lagrangian phase function $\theta$ of each embedded connected
component of $L\cap U$ satisfies (\ref{hseq}) for any open subset $U$ of
$\mathbb{C}^{n}$. Conversely, if \eqref{hseq} holds on each embedded connected
component then $L$ is Hamiltonian stationary.

\item (Unique continuation) If the intersection of two embedded connected
components contains an open set, then they coincide.
\end{enumerate}
\end{proposition}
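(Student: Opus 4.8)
The plan is to reduce the "Hamiltonian stationary" condition—which is stated globally on the varifold via the test of all Hamiltonian vector fields $J\mathrm{D}f$—to the local PDE \eqref{hseq} on embedded pieces, and then to use analyticity to obtain unique continuation. For part (1), first I would note that away from the non-embedded points, $L$ is locally a gradient graph $\Gamma_v$ over a Lagrangian tangent plane; by Proposition \ref{locallyembedded}, every $y\in L$ has a ball $B_r^{2n}(y)$ on which $\iota$ splits into finitely many embedded components $\Sigma_i$, each containing $y$ and each a graph. For a test function $f$ supported in such a ball (away from the non-embedded locus the components are disjoint), the first variation splits as a finite sum $\sum_i \int_{\Sigma_i}\langle J\mathrm{D}f,\mathcal H\rangle\,d\mu_{\Sigma_i}$, where on each $\Sigma_i$ the generalized mean curvature is the classical one $H_i=J\mathrm{D}_g\theta_i$ (using the standard identity $H=J\nabla_g\theta$ for Lagrangian submanifolds in $\mathbb C^n$, cf. \cite{MR1202805}). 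Since $\langle J\mathrm{D}f, J\nabla_g\theta_i\rangle = \langle \mathrm{D}f,\nabla_g\theta_i\rangle = \langle\nabla_g(f|_{\Sigma_i}),\nabla_g\theta_i\rangle$ on the Lagrangian submanifold, integration by parts gives $-\int_{\Sigma_i} (f|_{\Sigma_i})\,\Delta_g\theta_i\,d\mu$. Because each $\Sigma_i$ passes through the single point $y$ but is otherwise an independent graph, one can choose $f$ to localize near an interior point of a fixed $\Sigma_{i_0}$ and away from the others, forcing $\int_{\Sigma_{i_0}} (f|_{\Sigma_{i_0}})\Delta_g\theta_{i_0}\,d\mu = 0$ for all such $f$, hence $\Delta_g\theta_{i_0}=0$ on the embedded component. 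The converse direction runs the same computation backwards: if \eqref{hseq} holds on each embedded component then for any $f\in C_0^\infty(U)$, covering $\mathrm{supp}(f)\cap L$ by finitely many splitting balls and using a partition of unity reduces $\int_U\langle J\mathrm{D}f,\mathcal H\rangle\,d\mu_V$ to a finite sum of terms $-\int_{\Sigma_i}(f|_{\Sigma_i})\Delta_g\theta_i\,d\mu$, each vanishing.

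For part (2), the key input is analyticity: equation \eqref{hseq}, written out as a fourth-order equation for the potential $u$, is a nonlinear elliptic equation with analytic coefficients (the arctangent expansion \eqref{theta} is real-analytic in $D^2u$), so by the elliptic regularity results of \cite{ChenWarren} any $C^1$ solution is in fact real-analytic; consequently each embedded component, parametrized as a graph $\Gamma_{u_i}$, is a real-analytic submanifold. If two embedded components $\Sigma_1,\Sigma_2$ overlap on a nonempty open set $W$, then near a point of $W$ both are graphs over a common Lagrangian plane and the potentials $u_1,u_2$ agree (up to the additive constant fixed by passing through a common point) on an open set; since $u_1,u_2$ are real-analytic solutions of the same equation on a connected domain, analytic continuation forces $u_1\equiv u_2$ on the connected overlap of their domains, so $\Sigma_1$ and $\Sigma_2$ agree as submanifolds wherever both are defined.

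The main obstacle I anticipate is the bookkeeping at the non-embedded points when carrying out the localization in part (1): one must make sure that the generalized mean curvature $\mathcal H$ of the varifold $V$ really does decompose, near such a point, as the superposition of the classical mean curvatures of the finitely many sheets $\Sigma_i$ through that point, and that no extra first-variation contribution is hidden at the singular locus. The clean way to handle this is to note that the non-embedded set has measure zero, choose test functions supported away from it when extracting the PDE on each sheet (this suffices because a finite union of embedded analytic sheets is dense in each splitting ball), and then, once \eqref{hseq} is known on every embedded component, obtain the global identity \eqref{varform} by the partition-of-unity argument above, now legitimately summing the sheet-wise integration-by-parts identities over all of $\mathrm{supp}(f)$. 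A secondary point requiring care is that the additive constant in the potential $u_i$ is irrelevant to $\theta$ and to the submanifold, so "coincidence of potentials" should be phrased as coincidence of the gradient graphs; and one must verify that an overlap containing an open subset of $\mathbb R^{2n}$ indeed forces the two graphs to be over the same Lagrangian plane near that open set, which follows since the tangent planes agree on the overlap.
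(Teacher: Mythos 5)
Your overall strategy is the paper's (localize, integrate by parts sheet by sheet, then analyticity for unique continuation), and the converse direction of (1) is handled exactly as in the paper. But there is a genuine gap in the forward direction of (1), precisely at the obstacle you flag. Your localization ``near an interior point of a fixed $\Sigma_{i_0}$ and away from the others'' requires that the locus where the union of components is locally embedded be dense in each $\Sigma_{i_0}$. Your two justifications are (a) ``the non-embedded set has measure zero'' and (b) ``a finite union of embedded analytic sheets is dense.'' Neither works at this stage: (b) is circular, since analyticity of the sheets is a consequence of \eqref{hseq}, which is exactly what you are trying to establish; and (a) is unsubstantiated for merely smooth sheets --- a priori two smooth embedded components could coincide on a closed set of positive measure with empty interior, at every point of which the union fails to be embedded. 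The paper closes this gap with a purely topological (Baire-type) argument: for each pair $i,j$, the set $\mathcal{E}_{ij}^{c}\subset\Sigma_{i}$ where $\Sigma_{i}\cup\Sigma_{j}$ fails to be locally embedded is closed and has empty interior in $\Sigma_{i}$ (if a relatively open piece of $\Sigma_{i}$ sat inside $\Sigma_{j}$, the union would be locally embedded there), so the embedded locus is open and dense; one gets $\Delta_{g}\theta_{i}=0$ there and then everywhere by continuity of $\Delta_{g}\theta_{i}$. A second point your localization misses: at an embedded point of the point set $L$ the immersion may still cover it $m(y)>1$ times, so no test function can separate the sheets; the paper handles this by observing that the $m(y)$ preimage components are isometric with identical Lagrangian angle, so the first variation is $m(y)$ times a single-sheet integral, which still forces harmonicity.

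In (2) your idea is the paper's, but the one-line analytic continuation only shows the coincidence set is open in each component; to conclude that $\Sigma_{1}=\Sigma_{2}$ you also need it to be closed, which is where the real work lies. The paper takes the maximal coincidence set $W$, examines a point $q\in\partial W$ lying on both components, writes both as gradient graphs over the common tangent plane at $q$, and only then invokes analyticity (Morrey) to contradict maximality; your phrase ``agree as submanifolds wherever both are defined'' stops short of this connectedness argument.
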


\begin{proof}
For any $y_{0}\in L,$ let $L\cap B_{r}^{2n}(y_{0})$ decompose into embedded
connected components $\Sigma_{1},\dots,\Sigma_{m}$ as in Proposition
\ref{locallyembedded}. Each $\Sigma_{i}$ is Lagrangian with a Lagrangian angle
$\theta_{i}:\Sigma_{i}\rightarrow\mathbb{R}/2\pi\mathbb{Z}$ defined (up to
orientation) by
\[
dz^{1}\wedge\cdots\wedge dz^{n}|_{\Sigma_{i}}=e^{\sqrt{-1}\theta_{i}}%
d\mu_{g_{i}}%
\]
and its mean curvature vector satisfies $H_{i}=J\nabla\theta_{i}$ where $J$ is
the complex structure on ${\mathbb{C}}^{n}$ (cf. \cite{HL}) and $d\mu_{g_{i}}$
is the volume form of the induced metric $g_{i}$ on $\Sigma_{i}$ by the
Euclidean metric on $\mathbb{R}^{2n}$. \ We divide the point set $L$ into two
pieces. We say a point $y\in L$ is an embedded point if there is an open set
$W$ in ${\mathbb{R}}^{2n}$ containing $y$ so that the point set $L\cap W$ is
an embedded submanifold in $\mathbb{R}^{2n}$ and let $\mathcal{E}$ be the set
of all embedded points of $L$. We show first that \eqref{hseq} holds on
$\mathcal{E}$, and then argue that for each $\Sigma_{i},$ $\mathcal{E\cap}$
$\Sigma_{i}$ is dense in $\Sigma_{i}$.

For any $y\in\mathcal{E}$, $L\cap$ $B_{r}^{2n}(y)$ is an embedded submanifold
for some $r>0,$ and by Proposition \ref{locallyembedded}, there exists a
sufficiently small ball $B_{r_{0}}^{2n}(y)$ in $\mathbb{R}^{2n}$, such that
$\iota^{-1}(B_{r_{0}}^{2n}(y))$ is a finite disjoint union of $E_{1}%
,...,E_{m(y)}$, and $\iota|_{E_{i}}$ is an embedding with
\begin{equation}
\Sigma_{i}:=\iota(E_{i})=L\cap B_{r_{0}}^{2n}(y) \label{all_the_same}%
\end{equation}
for each $i$, and $m(y)$ is constant on $L\cap B_{r_{0}}^{2n}(y)$. Pulling
back the Euclidean metric on $\mathbb{R}^{2n}$ and the $n$-form $dz^{1}%
\wedge\cdots\wedge dz^{n}$ by $\iota$, we see that $E_{i},E_{j}$ are isometric
in their induced metrics, and the Lagrangian angles $\theta_{\iota}$ are the
same, since $\iota|_{E_{j}}^{-1}\circ\iota|_{E_{i}}:E_{i}\rightarrow E_{j}$ is
a diffeomorphism.

Now for any $\phi\in C_{c}^{\infty}(M)$ with support in $E_{1}$, define
\[
\varphi(y)=%
\begin{cases}
\phi(\iota^{-1}(\{y\})) & \mbox{if} \,\,\,y\in L\cap B_{r_{0}}^{2n}(y)\\
0 & \mbox{if}\,\,\,\text{otherwise}%
\end{cases}
\]
which is a well-defined function and smooth on $L$ and can be extended to a
function $f\in C_{c}^{\infty}(\mathbb{R}^{2n})$. Since $L$ is Hamiltonian
stationary, by (\ref{varform}), we have
\begin{align*}
0  &  =\int_{L}\langle J\,\mbox{D}f(y),\mathcal{H}(y)\rangle\hbox{d}\mu
_{L}(y)\\
&  =\int_{M}\langle J\,\mbox{D}f(\iota(x)),H_{\iota}(x)\rangle\,\hbox{d}\mu
_{M}\\
&  =\int_{M}\langle J\,\mbox{D}f(\iota(x)),J\nabla\left(  \theta\circ
\iota\right)  (x)\rangle\hbox{d}\mu_{M}\\
&  =-\int_{M}f(\iota(x))\Delta\left(  \theta\circ\iota\right)  (x)\hbox{d}\mu
_{M}\\
&  =-\int_{E_{1}\cup\cdots\cup E_{m(y)}}f(\iota(x))\Delta\left(  \theta
\circ\iota\right)  (x)\hbox{d}\mu_{M}\\
&  =-m(y)\int_{E_{1}}\phi\,\Delta\theta_{\iota}\hbox{d}\mu_{M}%
\end{align*}
and the harmonicity of $\left(  \theta\circ\iota\right)  $ on $E_{1}$ follows
as $\phi$ is arbitrary function in $C_{c}^{\infty}(E_{1})$. By
(\ref{all_the_same}), \eqref{hseq} holds on $\mathcal{E}\cap\Sigma_{i}$.

Next, we show that $\mathcal{E}\cap\Sigma_{i}$ is dense in $\Sigma_{i}$.
First, we consider two embedded connected components $\Sigma_{i}$ and
$\Sigma_{j}$ (if there is only one, we are done), and let $\mathcal{E}_{ij}$
be the set
\[
\mathcal{E}_{ij}=\left\{  y\in\Sigma_{i}:\left(  \Sigma_{i}\cup\Sigma
_{j}\right)  \cap B_{r}^{2n}(y)\text{ is embedded for some }r>0\text{
}\right\}
\]
The set $\mathcal{E}_{ij}$ is open in $\Sigma_{i}$. The complement in
$\Sigma_{i}$
\[
\mathcal{E}_{ij}^{c}=\Sigma_{i}\backslash\mathcal{E}_{ij}\subseteq\Sigma
_{i}\cap\Sigma_{j}%
\]
has no interior points in $\Sigma_{i}$ : If
\[
B_{r}^{2n}(y)\cap\Sigma_{i}\subseteq\Sigma_{i}\cap\Sigma_{j}%
\]
then necessarily
\[
B_{r}^{2n}(y)\cap\left(  \Sigma_{i}\cap\Sigma_{j}\right)  =B_{r}^{2n}%
(y)\cap\Sigma_{i}%
\]
so $\Sigma_{i}\cap\Sigma_{j}$ is embedded near $y$, and $y\in\mathcal{E}%
_{ij}.$ Thus $\mathcal{E}_{ij}^{c}$ is closed and nowhere dense in $\Sigma
_{i}$, in turn, $\mathcal{E}_{ij}$ is dense and open in $\Sigma_{i}$. Now we
claim $\mathcal{E}\cap\Sigma_{i}=\Sigma_{i}\backslash\cup_{j}\mathcal{E}%
_{ij}^{c}$. \ To see this, if $y\in\Sigma_{i}\backslash\cup_{j}\mathcal{E}%
_{ij}^{c}$, then $y\in\Sigma_{i}\cap_{j}E_{ij}$. For each $j$ there is a
neighborhood $U_{ij}$ of $y$ so that $\Sigma_{i}\cap U_{ij}$ is an embedded
submanifold, then $\Sigma_{i}\cap_{j}U_{ij}$ is embedded since there are only
finitely many $j$, thus $y\in\mathcal{E}\cap\Sigma_{i}$. The other direction
is obvious. Combining the above, we see that \eqref{hseq} holds on the dense
set $\mathcal{E}\cap\Sigma_{i}$. \ Because $\Delta\theta_{\iota}$ is a smooth
function on $\Sigma_{i}$ we conclude that \eqref{hseq} holds on $\Sigma_{i}.$

Next, to show the converse, let $\{B_{\alpha}\}$ be a countable collection of
open balls in $\mathbb{R}^{2n}$ such that $\{B_{\alpha}\}$ covers $L$
and\ $\iota$ splits into embedded connected components on each $B_{\alpha}$.
\ \ Denote the components $\Sigma_{i,\alpha},$ that is, \ let $\iota\left(
E_{i,\alpha}\right)  =\Sigma_{i,\alpha}$ and let $g$ be the metric on
$E_{i,\alpha}$ such that $\iota$ is an isometry from $E_{i,\alpha}$ to the
induced metric on $\Sigma_{i,\alpha},$ which we denote $g_{i,\alpha}.$ Let
$\{\varphi_{\alpha}\}$ be a partition of unity subordinate to the open cover
$\{B_{\alpha}\}$ of the open set $\cup_{\alpha}B_{\alpha}$. For any $h\in
C_{c}^{\infty}(\mathbb{R}^{2n})$,
\begin{align*}
\int_{U}\langle J\hbox{D}h,H\rangle d\mu_{L} &  =\sum_{\alpha}\int_{U}\langle
J\hbox{D}(\varphi_{\alpha}h),H\rangle d\mu_{L}\\
&  =\sum_{\alpha}\int_{B_{\alpha}\cap L}\langle J\hbox{D}(\varphi_{\alpha
}h),H\rangle d\mu_{L}\\
&  =\sum_{\alpha}\sum_{i}\int_{\Sigma_{i,\alpha}}\langle J\hbox{D}(\varphi
_{\alpha}h),H_{\Sigma_{i,\alpha}}\rangle d\mu_{g_{i,\alpha}}\\
&  =\sum_{\alpha}\sum_{i}\int_{E_{i,\alpha}}\langle\nabla_{g}(\varphi_{\alpha
}h)\circ\iota,\nabla_{g}\left(  \theta\circ\iota\right)  \rangle d\mu_{g}.
\end{align*}
Therefore $L$ is Hamiltonian stationary as $\theta\circ\iota$ is harmonic on
each $E_{i,\alpha},$ and the restriction of $\varphi_{\alpha}\in C_{c}%
^{\infty}(\Sigma_{i,\alpha})$. \ This proves (1). \ 

Suppose that $\Sigma_{1}\cap\Sigma_{2}$ contains a nonempty connected set
$W_{0}$ that is open with respect to the topology on both $\Sigma_{1}$ and
$\Sigma_{2}.$ Let $W$ be the union of all connected subsets of $\Sigma_{1}%
\cap\Sigma_{2}$ that are open in both $\Sigma_{1}$ and $\Sigma_{2}$ and that
contain $W_{0}$. \ We claim that $W=\Sigma_{1}=$ $\Sigma_{2}.$ \ Let $\partial
W:=\overline{W}\backslash W\not =\emptyset$, and consider two cases.

Case 1: $\partial W\cap\Sigma_{1}=\emptyset$ (or $\partial W\cap\Sigma
_{2}=\emptyset\,$) . First $\partial W\subseteq\partial\Sigma_{1}%
\subset\partial U$, as the immersion is proper. If $\Sigma_{1}\backslash
W\not =\emptyset$, let $p\in\Sigma_{1}\backslash W$ be an arbitrary point. If
every neighborhood of $p$ in $\Sigma_{1}$ intersects $W$ then $p\in\partial
W$, in turn $p\in\Sigma_{1}\cap\partial\Sigma_{1}=\emptyset$, as $\Sigma_{1}$
is embedded. \ So there is a neighborhood of $p$ in $\Sigma_{1}$ not
intersecting $W$, and we conclude that $\Sigma_{1}\backslash W$ is open in
$\Sigma_{1}$. But this is impossible since $\Sigma_{1}=W\cup(\Sigma
_{1}\backslash W)$ is connected. This contradicts $p\in\Sigma_{1}\backslash
W,$ so we conclude that $\Sigma_{1}\subset W\subset\Sigma_{1}\cap\Sigma_{2}.$
\ In particular, $\Sigma_{1}=W.$ \ Now if $\partial W\cap\Sigma_{2}$
$=\emptyset,$ repeat the argument to conclude that $\Sigma_{2}=W.$ \ If
$\partial W\cap\Sigma_{2}$ $\neq\emptyset,$ then $\partial\Sigma_{1}\cap
\Sigma_{2}\neq\emptyset,$ which leads to a contradiction, as then
$\partial\Sigma_{1}\subset\partial U$ and $\Sigma_{2}\cap\partial
U=\emptyset.$ \ Thus $\Sigma_{1}=W=\Sigma_{2}.$

Case 2: $\partial W\cap\Sigma_{1}\not =\emptyset$ and $\partial W\cap
\Sigma_{2}\not =\emptyset$. Let $q\in\partial W\cap\Sigma_{1}$. There is a
sequence $q_{k}\in W\rightarrow q$. As $W\subseteq\Sigma_{2}$ and $\Sigma
_{1}\cap\partial\Sigma_{2}=\emptyset$ we have $q\in\Sigma_{2}$, so $q\in
\Sigma_{1}\cap\Sigma_{2}\cap\partial W$. Write $\Sigma_{1},\Sigma_{2}$ locally
over their common tangent space $T_{q}$ at $q$ as graphs of $Du_{1},Du_{2}$
for some smooth functions $u_{1},u_{2}$ on some ball $B_{r}(q)$ in $T_{q}$,
with $u_{1}(q)=u_{2}(q)$ and $Du_{1}(q)=Du_{2}(q)=0$. By maximality of $W$,
$\Sigma_{1},\Sigma_{2}$ coincide over $B_{r}(q)\cap W$ and are distinct on
$B_{r}(q)\backslash W$ for all small $r$. Thus we can arrange $u_{1}=u_{2}$ on
$B_{r}(q)\cap W$ and $Du_{1}(x_{k})\not =Du_{2}(x_{k})$ for a sequence of
$x_{k}\in B_{r}(q)\backslash W\rightarrow q$. However, as solutions to
\eqref{hseq}, both $u_{1},u_{2}$ are analytic, by \cite[p. 203]{Morrey58}.
Thus, $u_{1}=u_{2}$ on $B_{r}(q)$ as they agree on $B_{r}(q)\cap W$. We have a
contradiction, leaving us with the conclusion in Case 1, that $\Sigma
_{1}=W=\Sigma_{2}$
\end{proof}

\begin{definition}
\emph{A proper immersion $\iota:M\rightarrow\mathbb{C}^{n}$ is} reduced
\emph{if there is an open dense subset of points from $M$ on which $\iota$ is
injective. }
\end{definition}

Equivalently, the proper immersion $\iota$ is reduced if the varifold
structure defined on the point set $\iota(M)$ with the Hausdorff measure
agrees with the varifold structure defined by pushing forward the induced
volume measure.

\begin{proposition}
\label{canonical} Suppose that $L$ is a compact immersed Hamiltonian
stationary Lagrangian submanifold in $\mathbb{C}^{n}.$ There is a smooth
manifold $\tilde{M}^{n},$ unique up to diffeomorphism, such that $\tilde
{\iota}:$ $\tilde{M}^{n}\rightarrow\mathbb{C}^{n}$ defines $L$ and is reduced.
\end{proposition}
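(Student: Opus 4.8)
The plan is to realize $\tilde M$ as a quotient of an arbitrary domain manifold $M$ with $\iota(M)=L$, obtained by identifying points of $M$ whose local images coincide as the same embedded piece of $L$, to check that this quotient is a smooth compact connected manifold on which the induced immersion is reduced and Hamiltonian stationary, and finally to prove uniqueness by exhibiting $\tilde M$ as something intrinsic to the point set $L$ together with its local sheets. Throughout, the non-formal ingredient is the unique continuation of Proposition \ref{L=M}(2): two embedded Hamiltonian stationary Lagrangian pieces that overlap on an open set must coincide, which rests on the analyticity of solutions of \eqref{hseq}.

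\emph{Step 1: the quotient.} Fix a proper Hamiltonian stationary Lagrangian immersion $\iota:M\to\mathbb{C}^n$ with $M$ connected and $\iota(M)=L$. Declare $x_1\sim x_2$ when $\iota(x_1)=\iota(x_2)=:y$ and there are neighbourhoods $U_i\ni x_i$ on which $\iota$ is an embedding with $\iota(U_1)$ and $\iota(U_2)$ agreeing as sets near $y$; using that $\iota^{-1}(y)$ is finite and Proposition \ref{locallyembedded}, this is an equivalence relation. Put $\tilde M=M/\!\sim$ with the quotient topology, $q:M\to\tilde M$. The first technical point is that the $q$-saturation of any small embedding neighbourhood $U\subset M$ is open: if $x'\sim x''$ with $x''\in U$, then on the embedded piece through $x'$ every point near $\iota(x')$ lies, via a point of $U$, in the saturation, so a whole neighbourhood of $x'$ does. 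Hence $q$ is an open local homeomorphism, and the pairs $(q(U),\iota|_U)$ furnish charts whose transition maps are the diffeomorphisms $\iota|_{U'}^{-1}\circ\iota|_U$ between overlapping embedded pieces, which are smooth, making $\tilde M$ a smooth $n$-manifold.

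\emph{Step 2: Hausdorff, and the induced immersion.} Hausdorffness is where the analytic input enters. If $x_1\not\sim x_2$ lie over the same $y$, choose disjoint embedding neighbourhoods $U_1,U_2$; were their saturations to meet at some $x'$, then $\iota(U_1)$ and $\iota(U_2)$ would agree near $\iota(x')$, hence by unique continuation (Proposition \ref{L=M}(2)) agree near $y$ as well, forcing $x_1\sim x_2$, a contradiction; and if $\iota(x_1)\neq\iota(x_2)$ separation is immediate. Thus $\tilde M$ is a compact connected Hausdorff smooth $n$-manifold. Set $\tilde\iota([x])=\iota(x)$; in the charts above $\tilde\iota$ is $\iota|_U$, so $\tilde\iota$ is a proper smooth immersion with $\tilde\iota(\tilde M)=L$. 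It has the same local embedded pieces as $\iota$, so \eqref{hseq} holds on each of them and the converse half of Proposition \ref{L=M}(1) shows $\tilde\iota$ is Hamiltonian stationary. Finally, recalling (see the proof of Proposition \ref{L=M}) that the set $\mathcal{E}$ of embedded points of $L$ is dense in every sheet, hence in $L$: over a point of $\mathcal{E}$ there is a single sheet, so all of its $\iota$-preimages are $\sim$-equivalent, i.e. $\tilde\iota$ is injective over the open dense set $\mathcal{E}$; therefore $\tilde\iota$ is reduced.

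\emph{Step 3: uniqueness.} Let $\tilde\iota_i:N_i\to\mathbb{C}^n$, $i=1,2$, be reduced proper Hamiltonian stationary Lagrangian immersions from connected compact manifolds with common image $L$. The key consequence of reducedness is that two distinct points of $N_i$ over a point $y$ cannot trace the same sheet of $L$ near $y$: otherwise $\tilde\iota_i$ would be at least two-to-one over a subset of $\mathcal{E}$ of positive $\mathscr{H}^n$-measure (using density of $\mathcal{E}$ in each sheet), so the push-forward varifold would have density $\geq 2$ there against Hausdorff density $1$, contradicting the equality of the two varifolds. Now define $\Phi:N_1\to N_2$ by sending $x$ to the point of $N_2$ lying over $\tilde\iota_1(x)$ and tracing the sheet $\Sigma=\tilde\iota_1(U)$ that $\tilde\iota_1$ traces near $x$: such a point is unique by the previous sentence, and it exists because the set of $z\in N_2$ with $\tilde\iota_2(z)\in\Sigma$ and $\tilde\iota_2(\text{a neighbourhood of }z)\subset\Sigma$ is nonempty and open, and---by compactness of $N_2$ together with unique continuation, exactly as in Step 2---has $\tilde\iota_2$-image closed in the connected set $\Sigma$, hence equal to $\Sigma$. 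In the charts of Steps 1--2 one has $\Phi=\tilde\iota_2^{-1}\circ\tilde\iota_1$ locally, so $\Phi$ is smooth with $\tilde\iota_2\circ\Phi=\tilde\iota_1$; the symmetric construction gives a smooth two-sided inverse, so $\Phi$ is a diffeomorphism, in fact the unique one commuting with the two immersions.

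\emph{Main obstacle.} Every place where we need $\tilde M$ to be Hausdorff, the chart structure to be consistent across sheets, or the matching map $\Phi$ to extend from the generic locus to the whole manifold, we are really invoking that two Hamiltonian stationary Lagrangian pieces overlapping on an open set coincide---the unique continuation of Proposition \ref{L=M}(2), which depends essentially on the analyticity of solutions of \eqref{hseq}. The second delicate point is the translation of ``reduced'' into the assertion that distinct points over $y$ trace distinct sheets, for which one compares the density of the push-forward varifold with the number of local sheets and uses density of $\mathcal{E}$ in each sheet.
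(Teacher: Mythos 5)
Your approach is fundamentally the same as the paper's: you build $\tilde{M}$ as the quotient $M/\!\sim$ by exactly the paper's equivalence relation (equal image and agreeing local embedded pieces), and the entire construction hinges on the unique continuation Proposition~\ref{L=M}(2). The main divergences are in the details and are worth noting.

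For reducedness the paper runs a Baire-category argument on the multiplicity function $m(x)=\#\iota^{-1}(\iota(x))$ to show the set $O_1$ where $m$ attains its minimum is open and dense, and then argues $\tilde\iota$ is injective over $\iota(O_1)$. You instead use directly that the set $\mathcal{E}$ of embedded points of $L$ is open and dense (which is established inside the proof of Proposition~\ref{L=M}), over which there is a single local sheet and hence a single $\sim$-class. This is cleaner, shorter, and avoids a step in the paper (``we may assume $\iota(U_{x_1})=\iota(U_{x_2})$ by taking $\iota|_{U_{x_i}}^{-1}(\iota(U_{x_1})\cap\iota(U_{x_2}))$'') whose justification is not spelled out. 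You also check Hausdorffness of the quotient explicitly via unique continuation, where the paper routes this through a covering-space citation.

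Two small remarks on Step 3. In showing that distinct preimages of a point in a reduced immersion cannot trace the same sheet, you invoke varifold density and an implicit identification of the two pushforward varifolds with $\mathscr{H}^n\llcorner L$. This is fine (reducedness forces multiplicity $1$ a.e.), but it is a detour: two such points together with unique continuation make $\tilde\iota_i$ at least two-to-one on a whole open set, directly contradicting the open-dense injectivity in the definition of ``reduced.'' Second, the openness/closedness argument for the tracing set in $N_2$ needs a bit of care in stating what ``closed'' means (closed as a subset of the open piece $\Sigma$, using compactness of $N_2$ to extract a limit and unique continuation to see the limit still traces $\Sigma$); your sketch points in the right direction. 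Overall, this is a correct proof following essentially the paper's route, with a tidier treatment of reducedness.
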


\begin{proof}
Let $\iota:M\rightarrow\mathbb{C}^{n}$ define $L.$ \ Define%
\[
m:M\rightarrow%
\mathbb{N}
\]
by
\[
m(x)=\#\left\{  \iota^{-1}(\iota(x))\right\}  .
\]
In general, the function $y\mapsto$ $\#\left\{  \iota^{-1}(y)\right\}  $ is
upper semicontinuous on $\mathbb{C}^{n}$ whenever $\iota$ is a proper
immersion. \ It follows that $m$ is upper semicontinuous on $M.$

Let%
\[
m_{1}=\min_{x\in M}m(x)
\]
and
\begin{align}
O_{1}  &  =\left\{  x\in M:m(x)=m_{1}\right\} \label{O1}\\
&  =\left\{  x\in M:m(x)<m_{1}+\frac{1}{2}\right\}  .\nonumber
\end{align}
By upper semicontinuity, we see that $O_{1}$ is open. $\ $Next we claim that
$O_{1}$ is dense. $\ $Suppose that $O_{1}^{c}$ has nontrivial interior $V$,
and let $x_{1}\in\partial V.$ \ As a boundary point, every neighborhood of
$x_{1}$ intersects both $\left\{  m(x)>m_{1}\right\}  $ and $\left\{
m(x)=m_{1}\right\}  $ and by upper semicontinuity, we have that that
$m(x_{1})>m_{1}.$ By Proposition \ref{locallyembedded} there is a neighborhood
$U$ of $\iota(x_{1})$ that splits into exactly $m(x_{1})$ embedded connected
components; let $E_{1}\subset M$ be the one containing $x_{1}$ and label the
others $E_{2},...,E_{m(x_{1})}.$ \ Now every open set containing $x_{1}$
intersects $V$, thus $V\cap E_{1}$ is an non-empty open subset of $E_{1}$ on
which $m\geq m_{1}+1.$ In particular, for all $x\in V\cap E_{1}$ there is some
subset $\alpha\left(  x\right)  \subset\left\{  2,...,m(x_{0})\right\}  $ with
$\left\vert \alpha\left(  x\right)  \right\vert =m_{1}$ such that
\[
\iota(x)\in%
{\displaystyle\bigcap_{j\in\alpha\left(  x\right)  }}
\iota(E_{j})
\]
thus
\[
V\cap E_{1}\subset%
{\displaystyle\bigcup\limits_{\left\vert \alpha\right\vert =m_{1}}}
{\displaystyle\bigcap_{j\in\alpha}}
\iota_{|_{E_{1}}}^{-1}\left(  \iota(E_{j})\right)
\]
which is a finite union of closed sets. Applying the Baire category theorem,
we conclude that there is a set of at least $m_{1}$ components that intersect
not only each other but also $\iota(E_{1})$ in an open set. By Proposition
\ref{L=M} (2), we conclude these components must coincide on all of $U,$ in
particular, we have $m(x)\geq m_{1}+1$ in a neighborhood of $x_{1}$,
contradicting our assumption that $x_{1}$ was a boundary point of $V$. It
follows that $O_{1}^{c}$ has empty interior, so $O_{1}$ is dense and
$m(x)=m_{1}$ on an open dense set.

Next, we define a quotient map $\pi:M\rightarrow\tilde{M}:=M/\!\sim$ as
follows. For $x,y\in M$, declare $x\sim y$ if both

\begin{enumerate}
\item $\iota(x)=\iota(y)$;

\item There exists a neighborhood $U_{x}$ of $x$ and a neighborhood $U_{y}$ of
$y$ in $M$ such that
\[
\iota(U_{x})=\iota(U_{y})
\]
and both are embedded connected components.
\end{enumerate}

Clearly, $\sim$ is reflexive, symmetric, and transitive (using intersections
of open sets), so defines a quotient map, and there is a unique quotient
topology on $\tilde{M}$ . By definition, the neighborhoods $\left\{
U_{x^{\prime}}:x^{\prime}\in\lbrack x]\right\}  $ provide an even covering of
a neighborhood of $\left[  x\right]  $ thus $\pi$ is a topological covering
map. It follows (\cite[Proposition 4.40]{Lee}) that $\tilde{M}$ has a unique
smooth manifold structure such that $\pi$ is a smooth covering map. By
condition 1), $\iota$ agrees on fibers of $\pi$, thus there is a unique map
$\tilde{\iota}$ $:\tilde{M}\rightarrow\mathbb{C}^{n}$ (\cite[Theorem
4.30]{Lee}) such that $\tilde{\iota}\circ\pi=\iota$. Now $\tilde{\iota}$ is an
immersion (this can be verified locally on an evenly covered neighorhoods)
from a compact manifold.

At any point $x\in O_{1}$, consider $[x_{1}],[x_{2}]\in\tilde{\iota}%
^{-1}(\iota(x))$, so $\iota(x_{1})=\iota(x_{2})=\iota(x)$. As $\iota$ is
immersive, there exist neighborhhoods $U_{x_{1}},U_{x_{2}}$ of $x_{1},x_{2}$
in $M$ respectively such that the restriction of $\iota$ on each neighborhood
is diffeomorphic onto its image and is an embedding into $\mathbb{R}^{2n}$.
Furthermore, we may assume $\iota(U_{x_{1}})=\iota(U_{x_{2}})$ by taking
$\iota|_{U_{x_{i}}}^{-1}(\iota(U_{x_{1}})\cap\iota(U_{x_{2}}))$ as $U_{x_{i}},
i=1,2$. So $[x_{1}]=[x_{2}]$, and $\tilde\iota$ is injective on the open dense
set $\tilde{\iota}^{-1}(\iota(O_{1}))$ in $\tilde{M}$. Thus, $\tilde{\iota}$
is reduced.


Finally, we argue that the smooth structure and topology are unique. Let
$\iota:M\rightarrow\mathbb{C}^{n}$ be any reduced Hamiltonian stationary
immersion defining $L$. Take an open cover (with respect to $\mathbb{C}^{n}%
$-topology$)$ of balls around points in $L$ on which $L$ splits into embedded
connected components, and choose a finite cover, say $N$ of such balls,
$B_{r_{1}}(y_{1})...,...B_{r_{N}}(y_{N})$. Define $E_{j,k}\subset M$ by
\[
\iota^{-1}(B_{r_{j}}(y_{j}))=\bigcup_{k=1}^{m_{j}}E_{j,k}%
\]
where $m_{j}$ is the number of connected components associated to $B_{r_{j}%
}(y_{j}).$ Now, let $\iota^{\prime}:M^{\prime}\rightarrow\mathbb{C}^{n}$ be
another reduced immersion, which determines the same set $L$. We can choose
the same set of balls $B_{r_{i}}(y_{i})$ in the same order, and define the
$E_{j,k}^{\prime}\subset M^{\prime}$ with the same choice of decomposition
into embedded connected components, noting that the decomposition is
determined by Proposition \ref{L=M} (2) together with the fact we have chosen
the immersion to be reduced: Each component is unique, so there can be no
discrepancy. Consider the map%
\[
F:M\rightarrow M^{\prime}%
\]
defined by
\[
F(x):=\iota^{\prime-1}|_{\Sigma_{j,k}}\circ\iota(x)\text{ whenever }x\in
E_{j,k}.
\]
Now each $x$ is contained in at least one $E_{j,k}$, as $\iota$ and
$\iota^{\prime-1}|_{\Sigma_{j,k}}$ are smooth, the map is clearly smooth,
provided it is not multiply defined, so we must show that it is well-defined.
Suppose that $x\in E_{j,k}$ $\cap E_{j^{\ast},k^{\ast}}$. If $\iota(x)$ is
contained in a set $U$ where $\iota^{\prime-1}$ is well-defined, uniqueness of
the definition is clear: $\iota$ and $\iota^{\prime-1}$ are both well-defined,
so $F(x)$ is defined regardless of which set we choose : $E_{j,k}$ or
$E_{j^{\ast},k^{\ast}}$. Now if $m(x)>1,$ we may use smooth continuation at
$x$, noting that $F(x)$ is well-defined at points in any neighborhood of $x.$
In particular, we have smooth maps%
\begin{align*}
\iota^{\prime-1}|_{E_{j,k}}\circ\iota(x)  &  :E_{j,k}\cap E_{j^{\ast},k^{\ast
}}\rightarrow M^{\prime}\\
\iota^{\prime-1}|_{E_{j^{\ast},k^{\ast}}}\circ\iota(x)  &  :E_{j,k}\cap
E_{j^{\ast},k^{\ast}}\rightarrow M^{\prime}%
\end{align*}
that are smooth individually and agree on an open dense set near $x$. They
must then agree completely on their common domain of definition. Thus $F(x)$
is well-defined. A smooth inverse can easily be constructed in the same way,
so we conclude that $F$ is a diffeomorphism.
\end{proof}

\section{ Curvature and higher order estimates}

\subsection{Graphical representation of Lagrangian submanifolds}

We begin with rephrasing, for Lagrangian submanifolds, a well known fact about
local graphical representation of embedded submanifolds, that gives a precise
lower bound, in terms of the length of the second fundamental form, on the
size of a ball in the tangent space over which the Lagrangian submanifold is a
graph of the gradient of a potential function with uniform Hessian bound. The
bounds are written in a convenient form for the rotation argument in the proof
of Proposition \ref{r1}.

\begin{lemma}
\label{lemma1} Let $L$ be a properly and smoothly immersed connected
Lagrangian submanifold in ${\mathbb{C}}^{n}$. Suppose that $\left\Vert
A\right\Vert _{\infty}\leq C$ and $\partial L\cap{B_{\rho_{0}}^{2n}%
(0)}=\emptyset$, where $A$ is the second fundamental form of $L$ and
$B_{\rho_{0}}^{2n}(0)$ is the ambient ball with radius $\rho_{0}(C)=\frac{\pi
}{12C}$ and $0\in L$. Then any embedded connected component $\Sigma$ of
$B_{\rho_{0}}^{{2n}}(0)\cap L$ containing $0$ is a gradient graph over a
region $\Omega\subset T_{0}\Sigma,$ that is, there is a function
$u:\Omega\rightarrow\mathbb{R}$ such that
\[
\Sigma=\left\{  (x,Du(x)),x\in\Omega\right\}
\]
and $\Omega$ contains the ball $B_{r_{0}}^{{n}}(0)\subset T_{0}L$, where
\begin{equation}
r_{0}(C)=\frac{\pi}{12C}\cos\frac{\pi}{12}\label{r_0}%
\end{equation}
Further,
\begin{equation}
\left\vert D^{2}u\right\vert \leq\tan\frac{\pi}{12}\text{ on }B_{r_{0}}%
^{n}(0).\label{HB}%
\end{equation}

\end{lemma}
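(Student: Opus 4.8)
The plan is to reduce everything to the standard fact that an embedded submanifold with a pointwise bound on its second fundamental form is graphical over a definite ball in its tangent space, and then observe that in the Lagrangian setting the graph is automatically a gradient graph.

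First I would fix the embedded connected component $\Sigma$ of $B^{2n}_{\rho_0}(0)\cap L$ containing $0$, and work with the exponential-type parametrization of $\Sigma$ over $T_0\Sigma$. The standard argument is: follow the geodesics of $\Sigma$ emanating from $0$, or equivalently estimate how fast the tangent plane $T_p\Sigma$ can tilt away from $T_0\Sigma$ as $p$ moves along $\Sigma$. Since $\|A\|_\infty\le C$, the Gauss map is $C$-Lipschitz with respect to arclength on $\Sigma$, so along any unit-speed curve in $\Sigma$ starting at $0$ the angle between $T_p\Sigma$ and $T_0\Sigma$ grows at rate at most $C$; as long as this angle stays below $\pi/2$ the projection $\pi:\Sigma\to T_0\Sigma$ is a local diffeomorphism and $\Sigma$ is locally a graph. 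One has to be slightly careful to convert "length traveled in $\Sigma$" into "Euclidean distance from $0$", but the cosine of the tilt angle gives exactly this conversion: if the tilt stays $\le\phi<\pi/2$ then a point reached after intrinsic length $s$ lies at Euclidean distance at least $s\cos\phi$ from $0$ in the base. Choosing the angle budget to be $\pi/12$, i.e. running the intrinsic radius out to $s=\rho_0=\frac{\pi}{12C}$, keeps the tilt $\le C\cdot\rho_0=\pi/12$, so $\Sigma\cap B^{2n}_{\rho_0}(0)$ is a graph of some vector-valued $F:\Omega\to(T_0\Sigma)^\perp$ over a region $\Omega\supset B^n_{r_0}(0)$ with $r_0=\frac{\pi}{12C}\cos\frac{\pi}{12}$, and $|DF|\le\tan\frac{\pi}{12}$ since the tilt of the tangent plane controls the graph gradient. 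This is precisely where the two displayed constants come from.

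Next I would upgrade "graph" to "gradient graph". Because $L$ is Lagrangian and $0\in L$, we may choose coordinates $(x,y)$ on $\mathbb{C}^n=\mathbb{R}^n\times\mathbb{R}^n$ so that $T_0\Sigma=\mathbb{R}^n\times\{0\}$ is a Lagrangian plane and its orthogonal complement is $\{0\}\times\mathbb{R}^n$; then $\Sigma$ is the graph $y=F(x)$. The Lagrangian condition $\omega|_\Sigma=0$, i.e. $\sum_j dx^j\wedge dy^j=0$ on $\Sigma$, pulls back to $\sum_j dx^j\wedge dF^j=0$ on $\Omega$, which says exactly that the Jacobian $DF$ is symmetric at every point of $\Omega$. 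Since $B^n_{r_0}(0)$ is simply connected (indeed convex), a closed symmetric gradient is exact: $F=Du$ for some $u:B^n_{r_0}(0)\to\mathbb{R}$, determined up to a constant. (If $\Omega$ itself is not simply connected one still gets $u$ on the convex ball $B^n_{r_0}(0)$, which is all that is claimed.) The Hessian bound $|D^2u|=|DF|\le\tan\frac{\pi}{12}$ on $B^n_{r_0}(0)$ then follows immediately from the gradient bound on $F$ established in the previous step.

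The main obstacle, and the only genuinely non-formal point, is the geometric tilt/graphicality estimate with the sharp-looking constants: one must make precise that $\|A\|_\infty\le C$ forces the tangent-plane angle to grow at rate $\le C$ along intrinsic arclength, handle the conversion between intrinsic distance on $\Sigma$ and Euclidean distance in the base $T_0\Sigma$ (the $\cos\frac{\pi}{12}$ factor), and ensure the component $\Sigma$ does not "escape" $B^{2n}_{\rho_0}(0)$ and come back — this is where properness of the immersion and connectedness of $\Sigma$, together with the hypothesis $\partial L\cap B^{2n}_{\rho_0}(0)=\emptyset$, are used, exactly as in the connectedness argument of Proposition \ref{locallyembedded}. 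Everything after that — symmetry of $DF$ from the Lagrangian condition, integrating to get $u$, reading off $|D^2u|$ — is routine. I would present the tilt estimate as a short ODE/Gronwall comparison along unit-speed curves in $\Sigma$ and otherwise cite the standard embedded-submanifold graph lemma, adapting the constants.
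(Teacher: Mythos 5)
Your proposal is correct in outline but reorders the argument compared to the paper, and this ordering difference is worth noting. The paper does not first establish a generic vector-valued graph and then integrate $DF$ to a potential: it begins immediately from the local \emph{gradient}-graph representation $\Sigma=\{(x,Du(x))\}$ with $D^{2}u(0)=0$ (this is free for a Lagrangian near a point) and then propagates control quantitatively. The key "Gauss-map Lipschitz" estimate is made exact in the Lagrangian setting: the principal angles of $T_{p}\Sigma$ against $T_{0}\Sigma$ are precisely $\arctan\lambda_{i}$, where $\lambda_{i}$ are the eigenvalues of $D^{2}u$, and the paper proves directly, using the total symmetry of $u_{ijk}$, that
\[
\left\Vert \nabla_{g}\arctan\lambda_{i}\right\Vert \leq\left\Vert A\right\Vert,
\]
which is the precise version of your heuristic "tilt grows at rate at most $C$." This computation is specifically Lagrangian (the normal bundle is $J(T\Sigma)$, so $A$ is a fully symmetric $3$-tensor) and sidesteps the issue of fixing a Grassmannian metric and its constants, which you would otherwise need to address carefully in codimension $n$. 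The ODE comparison in the paper is also marginally finer: instead of fixing the angle budget $\phi=\pi/12$ and bounding $l'\leq\sec\phi$, the paper integrates $l'(\sigma)\leq\sec(Cl(\sigma))$ to get $l\leq\frac{1}{C}\arcsin(C\sigma)$, leading to the stronger bound $|\lambda_{i}|\leq\tan\left(\arcsin\left(\frac{\pi}{12}\cos\frac{\pi}{12}\right)\right)<\frac{\pi}{12}<\tan\frac{\pi}{12}$. Your version with the fixed budget still lands within the claimed $\tan\frac{\pi}{12}$, so it is adequate, and your closing step (closedness of the $1$-form $\sum F^{j}dx^{j}$ over the convex ball, giving $F=Du$) is a fine substitute for the paper's starting point. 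Just be aware that the burden you flag as the "main obstacle" — the tilt-rate estimate — is exactly where the paper's Lagrangian computation gives the clean constant, and a generic Grassmannian argument will need a comparable amount of care to land on $C=\Vert A\Vert_{\infty}$ exactly.
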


\begin{proof}
Locally, any embedded Lagrangian submanifold $\Sigma$ is the gradient graph
over its tangent space $T_{0}\Sigma$ of a function $u$ with $\hbox{D}^{2}%
u(0)=0$, say over a ball $B_{\sigma_{0}}^{n}(0)$. Let $\lambda_{i}\left(
x\right)  $ be the eigenvalues of $D^{2}u(x)$. First we claim:
\[
\left\Vert \nabla_{g}\arctan\lambda_{i}\right\Vert \leq\left\Vert A\right\Vert
_{\infty}\leq C.
\]
To see this, consider the (3,0)-tensor
\[
A:T\Sigma\times T\Sigma\times N\Sigma\rightarrow\mathbb{R}%
\]
defined by
\[
A(X,Y,\vec{n})=\hbox{D}_{X}Y\cdot\vec{n}.
\]
with components
\[
A(\partial_{i},\partial_{j},n_{k})=u_{jki}%
\]
under a local coordinate frame $\partial_{1},...,\partial_{n}$, where
$n_{k}=J\partial_{k}\in N\Sigma$ as $\Sigma$ is Lagrangian. Thus
\begin{align*}
\left\Vert A\right\Vert ^{2} &  =\sum_{i,j,k}g^{ia}g^{jb}g^{kc}u_{ijk}%
u_{abc}\\
&  =\sum_{i,j,k}g^{ii}g^{jj}g^{kk}u_{ijk}^{2}%
\end{align*}
when $D^{2}u$ is diagonalized. For any $i,$
\[
\left\Vert \nabla_{g}\arctan\lambda_{i}\right\Vert ^{2}=g^{jj}\left(  \frac
{1}{1+\lambda_{i}^{2}}\right)  ^{2}u_{iij}^{2}%
\]
with $D^{2}u$ still diagonalized. Now recalling $g^{ii}=1/(1+\lambda_{i}^{2})$
when diagonalized,
\[
\sum_{i,j}g^{jj}g^{ii}g^{ii}u_{iij}^{2}\text{ \ }\leq\sum_{i,j}g^{jj}%
g^{ii}g^{ii}u_{iij}^{2}+\sum_{i,j,k\neq i}g^{jj}g^{ii}g^{kk}u_{ikj}^{2}%
=\sum_{i,j}g^{jj}g^{ii}g^{kk}u_{ikj}^{2}=\left\Vert A\right\Vert ^{2}.
\]
This proves the claim.

Next, let $v$ be any unit vector in $T_{0}\Sigma$ and let $\gamma
_{v}(s)=(sv,Du(sv))$ for $s\in\lbrack0,\sigma)$. Integrating along $\gamma
_{v}$ and using the claim, we see that the maximum value of $\arctan
\lambda_{i}$ satisfies
\[
\left\vert \arctan\lambda_{i}\right\vert \leq C\emph{l}(\gamma_{v})
\]
using $\emph{l}(\gamma_{v})$ to denote the length of the curve. \ Thus the
maximum slope of $\gamma_{v}$ (precisely, each planar curve $(sv,u_{i}%
(sv)),i=1,...,n$) satisfies
\begin{equation}
\left\vert \lambda_{i}\right\vert \leq\tan(C\emph{l}(\gamma_{v})).
\label{simple_bound}%
\end{equation}
Now since%
\[
\emph{l}(\gamma_{v})=\int_{0}^{\sigma}\sqrt{1+|DDu(sv)|^{2}}\,ds
\]
we have
\begin{align*}
\frac{d}{d\sigma}\emph{l}(\gamma_{v})  &  =\sqrt{1+|D^{2}u(\sigma v)|^{2}%
}\,\leq\sqrt{1+\tan^{2}(C\emph{l}(\gamma_{v}))}\\
&  =\sec\left(  C\emph{l}(\gamma_{v})\right)
\end{align*}
Integrating,
\[
\emph{l}(\gamma_{v})\leq\frac{1}{C}\arcsin(C\sigma)
\]
provided $\sigma\in(0,\frac{\pi}{2C}).$ \ So if we choose
\[
r_{0}=\frac{\pi}{12C}\cos\frac{\pi}{12}%
\]
we see that%
\[
\emph{l}(\gamma_{v}([0,r_{0}]))\leq\frac{1}{C}\arcsin\left(  \frac{\pi}%
{12}\cos\frac{\pi}{12}\right)
\]
and hence by the slope bound \eqref{simple_bound}, using that $v$ can point to
any direction and that $L$ is connected with no boundary points, we have
\begin{equation}
\left\vert \lambda_{i}\right\vert \leq\tan\left(  \arcsin\left(  \frac{\pi
}{12} \cos\frac{\pi}{12}\right)  \right)  =\frac{\frac{\pi}{12}\cos\frac{\pi
}{12}}{\sqrt{1-\left(  \frac{\pi}{12}\cos\frac{\pi}{12}\right)  ^{2}}}%
<\frac{\pi}{12}.
\end{equation}

\end{proof}

\subsection{Smoothness estimates}

The local graphical representation in Lemma \ref{lemma1} with bounded Hessian
for the Lagrangian phase function can be used to construct, by a rotation, a
new Lagrangian graph which lies in a region of the Hessian space where the
Lagrangian phase function is uniformly concave. Therefore, for Hamiltonian
stationary Lagrangian graphs, a priori $C^{2,\alpha}$ estimates apply to the
Lagrangian potential and then the bootstrapping procedure in \cite{ChenWarren}
leads to higher order estimates on a ball of uniform radius.

\begin{proposition}
\label{r1} Suppose that $L=\iota(M)$ is Hamiltonian stationary Lagrangian
given by a proper immersion $\iota$. Suppose that $\left\Vert A\right\Vert
_{\infty}\leq C$ on $L\cap B_{\rho_{0}}^{2n}(0)$ and $0\in L$ as in Lemma
\ref{lemma1}. Let $\Sigma$ be an embedded connected component of $B_{\rho_{0}%
}^{{2n}}(0)\cap L$ containing $0$ and let
\[
U_{\pi/6}(T_{0}^{n}\Sigma)=e^{-i\frac{\pi}{6}}I_{\mathbb{C}^{n}}(T_{0}%
^{n}\Sigma)
\]
where $e^{-i\frac{\pi}{6}}I_{\mathbb{C}^{n}}$ is the complex multiplication
acting on vectors in the (real) subspace $T_{0}^{n}\Sigma\subset\mathbb{C}%
^{n}$. Then, there exists $r_{1}(\left\Vert A\right\Vert _{\infty})$,
$C_{4}(\alpha,\left\Vert A\right\Vert _{\infty})$ such that $\Sigma$ is a
gradient graph over a region $\Omega\subset U_{\pi/6}(T_{0}^{n}\tilde{L});$
that is
\[
\Sigma=\left\{  (x,D\bar{u}(x)),x\in\Omega\right\}
\]
such that $B_{r_{1}}^{n}(0)\subset\Omega$ and we have that
\[
\left\Vert D^{4}\bar{u}\right\Vert _{C^{\alpha}\left(  B_{r_{1}}\right)  }\leq
C_{4}(\alpha,\left\Vert A\right\Vert _{\infty})\text{ on }B_{r_{1}}^{n}(0)
\]
with
\[
r_{1}=\frac{\pi\left(  1-4\sin^{2}\frac{\pi}{12}\right)  \cos\frac{\pi}{12}%
}{12\left\Vert A\right\Vert _{\infty}\times8}.
\]

\end{proposition}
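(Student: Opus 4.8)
The plan is to start from the graphical representation given by Lemma \ref{lemma1}: $\Sigma$ is the gradient graph of a function $u$ over the ball $B_{r_0}^n(0)\subset T_0\Sigma$ with $|D^2u|\le\tan\frac{\pi}{12}$ and $r_0=\frac{\pi}{12C}\cos\frac{\pi}{12}$, where $C=\|A\|_\infty$. The eigenvalues $\lambda_j(x)$ of $D^2u(x)$ therefore all lie in the interval $(-\tan\frac\pi{12},\tan\frac\pi{12})$, equivalently the phase angles $\arctan\lambda_j$ lie in $(-\frac\pi{12},\frac\pi{12})$, so the total Lagrangian phase $\theta=\sum_j\arctan\lambda_j$ lies in $(-\frac{n\pi}{12},\frac{n\pi}{12})$ — this is the ``Hessian window'' where we have good control. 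The key analytic input is the regularity theory of \cite{ChenWarren}: once the gradient graph lies in a region of Hessian space on which the phase operator $\theta$ is uniformly concave (the so-called supercritical phase range, i.e. all $\arctan\lambda_j$ confined to an interval of length $<\pi$ contained in a half-window ensuring concavity of $\theta$ as a function of $D^2u$), the fourth-order equation $\Delta_g\theta=0$ decomposes as $\Delta_g$ applied to the concave second-order operator $\theta(D^2\bar u)$, yielding a priori $C^{2,\alpha}$ estimates on $\bar u$ and then, by the bootstrap in \cite{ChenWarren}, the stated $C^{4,\alpha}$ bound with constant depending only on $\alpha$ and $\|A\|_\infty$.

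The rotation step is what moves us into that concave window. First I would take the tangent plane $T_0^n\Sigma$ and apply the complex rotation $U_{\pi/6}=e^{-i\pi/6}I_{\mathbb C^n}$ to it; since $\Sigma$ is Lagrangian with $D^2u$ bounded by $\tan\frac{\pi}{12}$ near $0$, the rotated submanifold is still a Lagrangian gradient graph over $U_{\pi/6}(T_0^n\Sigma)$ of a new potential $\bar u$, with rotated eigenvalues $\bar\lambda_j=\tan\bigl(\arctan\lambda_j-\frac\pi6\bigr)$. Since $\arctan\lambda_j\in(-\frac\pi{12},\frac\pi{12})$, we get $\arctan\bar\lambda_j\in(-\frac\pi6-\frac\pi{12},-\frac\pi6+\frac\pi{12})=(-\frac\pi4,-\frac\pi{12})$ — an interval strictly inside $(-\frac\pi2,0)$ of length $\frac\pi6<\pi$, lying in the range where $\theta(\cdot)$ is a uniformly concave function of the Hessian. (This is precisely the device already used in \cite{ChenWarren}: rotating by $\pi/6$ exploits the $\pi/12$ Hessian bound with room to spare.) The one bookkeeping point is to track how the radius shrinks: a graph with Hessian bounded by $\tan\frac\pi{12}$ over $B_{r_0}^n(0)$ becomes, after the $e^{-i\pi/6}$ rotation, a graph over a region of the rotated plane containing a ball of radius $r_0\cdot(\text{cosine of the tilt angle})$; combined with a further geometric factor $\frac{1-4\sin^2\frac\pi{12}}{8}$ coming from covering the tilted piece by a centered ball inside the embedded component (the same kind of estimate as in the proof of Proposition \ref{locallyembedded}, the ``last-departing/first-returning'' argument), one arrives at
\[
r_1=\frac{\pi\bigl(1-4\sin^2\frac{\pi}{12}\bigr)\cos\frac{\pi}{12}}{12\|A\|_\infty\times 8}.
\]

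Then I would invoke \cite{ChenWarren} directly: on $B_{r_1}^n(0)$ the potential $\bar u$ solves the Hamiltonian stationary equation with phase confined to $(-\frac\pi4,-\frac\pi{12})$, so the a priori $C^{2,\alpha}$ estimate for the concave phase operator gives $\|\bar u\|_{C^{2,\alpha}(B_{r_1})}\le C(\alpha,\|A\|_\infty)$ (after normalizing $\bar u(0)=0$, $D\bar u(0)=0$), and then the bootstrapping in \cite{ChenWarren} — differentiating the equation and applying Schauder estimates iteratively, using that $\Delta_g$ has coefficients controlled by $D^2\bar u$ — upgrades this to the $C^{4,\alpha}$ bound $\|D^4\bar u\|_{C^\alpha(B_{r_1})}\le C_4(\alpha,\|A\|_\infty)$. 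The main obstacle, and the part that needs care rather than citation, is verifying that the rotated graph genuinely stays a graph over the asserted centered ball of radius $r_1$ — i.e. that the Hessian bound survives the rotation with a quantitative radius, and that no other sheet of the immersion $\iota$ intrudes into $B_{r_1}^{2n}$ around the relevant point; this is where the explicit trigonometric constants $\cos\frac\pi{12}$, $1-4\sin^2\frac\pi{12}$ are forced, and it mirrors the connectedness/graphicality argument already carried out in Lemma \ref{lemma1} and Proposition \ref{locallyembedded}. Once graphicality on $B_{r_1}^n(0)$ with the concave-window Hessian bound is in hand, the estimates are exactly those of \cite{ChenWarren} and the proof concludes.
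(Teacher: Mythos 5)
Your overall strategy matches the paper's: apply the Lewy--Yuan rotation by $\pi/6$ to push the Hessian of the graph potential into a window where the Lagrangian phase operator is uniformly concave, then invoke the fully nonlinear $C^{2,\alpha}$ theory (Cabr\'e--Caffarelli) followed by the Schauder bootstrap from \cite{ChenWarren} to reach $C^{4,\alpha}$. (Your rotated-phase interval $(-\frac{\pi}{4},-\frac{\pi}{12})$ versus the paper's stated $(\frac{\pi}{12},\frac{\pi}{3})$ is only a sign convention for the direction of rotation; both land in a half-window where $\pm\theta$ is uniformly concave.)

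Where your account goes wrong is the bookkeeping for $r_1$, which you yourself flag as ``the part that needs care.'' You attribute the factor $\frac{1-4\sin^2\frac{\pi}{12}}{8}$ to a covering/graphicality estimate \`a la Proposition \ref{locallyembedded} (``last-departing/first-returning''). That is not its source. In the paper, the factor $1-4\sin^2\frac{\pi}{12}$ is the linear lower bound on the coordinate change $x\mapsto\bar x$ of the rotation (the map $\cos\frac{\pi}{6}I+\sin\frac{\pi}{6}D^2u$ has eigenvalues bounded below by $\cos\frac{\pi}{6}-\sin\frac{\pi}{6}\tan\frac{\pi}{12}=1-4\sin^2\frac{\pi}{12}$), which immediately gives $\bar r_0=(1-4\sin^2\frac{\pi}{12})r_0$ as the radius over the rotated tangent plane on which the graph remains graphical; no intersection-with-spheres argument is needed once you know the rotation is a diffeomorphism with controlled Jacobian. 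The remaining factor $\frac{1}{8}$ comes from the rescaling-and-nesting step you omit: one rescales $\bar v(\bar x)=\bar u(\bar r_0\bar x)/\bar r_0^2$ to the unit ball, gets a H\"older bound on $\tilde\theta$ on $B_{3/4}$ via De Giorgi--Nash--Moser, a $C^{2,\alpha}$ bound on $B_{1/2}$ via Cabr\'e--Caffarelli, then $C^{3,\alpha}$ on $B_{1/4}$ and $C^{4,\alpha}$ on $B_{1/8}$ via the \cite{ChenWarren} bootstrap, and finally undoes the scaling; hence $r_1=\bar r_0/8$. You also state the $C^{2,\alpha}$ bound directly on $B_{r_1}$ before bootstrapping, but each bootstrap step shrinks the ball, so the $C^{2,\alpha}$ estimate actually lives on a larger ball than $B_{r_1}$. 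None of this changes the viability of the approach, but if you tried to carry out the covering argument you describe in place of the Jacobian bound and the nested interior estimates, you would not arrive at the stated constant.
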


\begin{proof}
First, by Lemma \ref{lemma1} we know that $\Sigma$ is represented by the
gradient graph of a function $u$ over a ball $B_{r_{0}}^{{n}}(0)$ contained in
the tangent space at $0$, and the Hessian of $u$ satisfies (\ref{HB}). As in
\cite[Proposition 4.1]{ChenWarren} we can use a Lewy-Yuan rotation
\cite[Section 2, Step 1]{YY06} to rotate the graph up by $\frac{\pi}{6}$:
\begin{align*}
\bar{x}  &  =\cos\frac{\pi}{6}\,x+\sin\frac{\pi}{6}\,Du(x)\\
\bar{y}  &  =-\sin\frac{\pi}{6}\,x+\cos\frac{\pi}{6}\,Du(x).
\end{align*}
Now by \cite[Proposition 4.1]{ChenWarren}, the graph of the gradient of the
new potential function
\begin{equation}
\bar{u}(x)=u(x)+\sin\frac{\pi}{6}\cos\frac{\pi}{6}\,\,\frac{\left\vert
Du(x)\right\vert ^{2}-\left\vert x\right\vert ^{2}}{2}-\sin^{2}\frac{\pi}%
{6}\,\,Du(x)\cdot x \label{u-bar}%
\end{equation}
over the $\bar{x}$-plane represents the same piece of $\Sigma$. It follows
that all of the eigenvalues now satisfy
\label{0}%
\begin{equation}
\lambda_{i}\in\left(  \tan\frac{\pi}{12},\tan\frac{\pi}{3}\right)  .
\label{HB2}%
\end{equation}
Thus the Lagrangian phase operator
\begin{equation}
F(D^{2}\varphi)=\sum_{\lambda_{j}\text{ eigenvalues of }D^{2}\varphi}%
\arctan\lambda_{j}\text{ } \label{LPO}%
\end{equation}
is uniformly concave on this region of Hessian space. We also know that the
Jacobian of the rotation map (cf. \cite[4.4]{ChenWarren}) is bounded below by
\begin{equation}
\det\frac{d\bar{x}}{dx}\geq\det\left[  \cos\frac{\pi}{6}\,\,I-\sin\frac{\pi
}{6}\tan\frac{\pi}{12}\,\,I\right]  >0.7. \label{JacobianB}%
\end{equation}
Thus the rotation of coordinates $x\rightarrow\bar{x}$ must give us a radius
\begin{equation}
\bar{r}_{0}=\left(  1-4\sin^{2}\frac{\pi}{12}\right)  r_{0}
\label{coordchange}%
\end{equation}
such that submanifold is graphical over a ball of radius $\bar{r}_{0},$ for a
new potential $\bar{u}$ representing the gradient graph over the plane
$U_{\pi/6}\left(  T_{0}\Sigma\right)  $. Now the Lagrangian phase operator
(\ref{LPO}) extends to a global (on Hessian space) concave uniformly elliptic
operator $\tilde{F}$ (cf. \cite[Section 5]{ChenWarren}) which agrees with $F$
on the following region of the Hessian space:
\[
\left\{  D^{2}\varphi:\left(  \tan\frac{\pi}{12}\right)  I\leq D^{2}%
\varphi\leq\left(  \tan\frac{\pi}{3}\right)  I\right\}  .
\]
In particular
\[
F(D^{2}\bar{u}\left(  \bar{x}\right)  )=\bar{\theta}(\bar{x})=\theta
(x)+n\frac{\pi}{6}.
\]

A rescaling of $\bar{u}$ gives $\bar{v}:$%
\[
\bar{v}(\bar{x})=\frac{1}{\bar{r}_{0}^{2}}\bar{u}(\bar{r}_{0}\bar{x})
\]
which is still a solution of the Hamiltonian stationary equation, since
(\ref{hseq}) only involves the second order derivatives of $\bar{u}$ which are
invariant under the rescaling, but now on the ball of radius $1$. Note that
the range of the Hessian (\ref{HB2}) does not change under rescaling, in
particular, if $\tilde{\theta}$ is the rescaled $\bar{\theta}$
\[
\tilde{\theta}(\bar{x})=\bar{\theta}\left(  \bar{r}_{0}\bar{x}\right)
\]
then $\tilde{\theta}$ satisfies a uniformly elliptic equation, with
ellipticity constants
\begin{align*}
\lambda_{0} =\frac{1}{1+\tan^{2}\frac{\pi}{3} },\,\,\,\,\, \Lambda_{0} =1
\end{align*}
according to (\ref{HB2}). Thus, by the De Giorgi-Nash-Moser theory, there is a
universal interior H\"{o}lder bound on $\tilde{\theta}$:
\[
\|\tilde{\theta}\|_{C^{\alpha}(B_{3/4})}\leq C_{DNM}\left(  \lambda
_{0},n\right)
\]
noting that $\bar{\theta}$ is bounded also by (\ref{HB2}).

Now we can apply \cite[Theorem 1.2]{CC03} \ to obtain
\begin{align*}
\Vert D^{2}\bar{v}\Vert_{C^{\alpha}(B_{1/2})}  &  \leq C_{CC}\left\{
\Vert\tilde{\theta}\Vert_{C^{\alpha}(B_{3/4})}+\Vert\bar{v}\Vert_{L^{\infty
}(B_{1})}\right\} \\
&  \leq C_{CC}\left\{  C_{DNM}\left(  \lambda_{0},n\right)  +\left\Vert
\bar{v}\right\Vert _{L^{\infty}(B_{1})}\right\}  .
\end{align*}
Now we also have%
\[
\left\Vert \bar{v}\right\Vert _{L^{\infty}(B_{1})}=\frac{1}{\bar{r}_{0}^{2}%
}\left\Vert \bar{u}\right\Vert _{L^{\infty}(B_{\bar{r}_{0}})}.
\]
We were assuming that $Du(0)=0,u\left(  0\right)  =0$ so that with (\ref{HB})
we have
\[
\left\Vert u\right\Vert _{L^{\infty}(B_{r_{0}})}\leq\tan\frac{\pi}{12}%
\frac{r_{0}^{2}}{2}%
\]
which leads to, by using \eqref{u-bar},
\[
\left\Vert \bar{u}\right\Vert _{L^{\infty}(B_{\bar{r}_{0}})}\leq\left\{
\tan\frac{\pi}{12}+\sin\frac{\pi}{6}\cos\frac{\pi}{6}\left(  2\tan^{2}%
\frac{\pi}{12}+1\right)  +\sin^{2}\frac{\pi}{6}\left(  \tan\frac{\pi}%
{12}+1\right)  \right\}  \frac{r_{0}^{2}}{2}.
\]
We conclude that
\[
\left\Vert D^{2}\bar{v}\right\Vert _{C^{\alpha}(B_{1/2})}\leq C_{CC}\left\{
C_{DNM}\left(  \lambda_{0},n\right)  +\frac{1}{\bar{r}_{0}^{2}}\,C_{T}\left(
\frac{\pi}{12}\right)  r_{0}^{2}\right\}
\]
for some universal trigonometric constant $C_{T}.$ \ Noting that bound
(\ref{coordchange}) bounds the ratio between $r_{0}$ and $\bar{r}_{0}$ we see
that we have a universal bound (depending only on $\alpha)$.
\[
\left\Vert D^{2}\bar{v}\right\Vert _{C^{\alpha}(B_{1/2})}\leq C_{2}\left(
\alpha\right)  .
\]
Now that the H\"{o}lder norm of $D^{2}\bar{v}$ is uniformly bounded, we may
apply the bootstrapping theory \cite[Section 5]{ChenWarren} to obtain
\begin{align*}
\left\Vert D^{3}\bar{v}\right\Vert _{C^{\alpha}(B_{1/4})}  &  \leq
C_{3}\left(  C_{2},\alpha\right) \\
\left\Vert D^{4}\bar{v}\right\Vert _{C^{\alpha}(B_{1/8})}  &  \leq
C_{4}\left(  C_{3},C_{2},\alpha\right)  .
\end{align*}
Now we may scale back to $\bar{u}$ and get that
\[
\left\Vert D^{4}\bar{u}\right\Vert _{C^{\alpha}(B_{\bar{r}_{0}/8})}\leq
C_{4}(\alpha)\bar{r}_{0}^{\,-2-\alpha}%
\]
Choosing $r_{1}=\bar{r}_{0}/8$ and recalling (\ref{coordchange}) and
(\ref{r_0}) gives us the result.
\end{proof}

\subsection{Curvature estimates with small total extrinsic curvature}

The next result establishes the key pointwise curvature estimates of a
Hamiltonian stationary submanifold under the assumption that the total
extrinsic curvature $\Vert A\Vert_{L^{n}}$ is small. This is an analogue to
results on minimal surfaces, harmonic maps and prescribed mean curvature
hypersurfaces (cf. \cite{CSc}, \cite{An}, \cite{ScUh}, \cite{Sharp},
\cite{ZhouZhu}). The main difference here from the minimal surfaces case is
the lack a useful Simons' type inequality in the Hamiltonian stationary case.
The $C^{4,\alpha}$ estimate for the scalar potential function $u$ allows us to
carry through an argument similar to that in \cite{CSc}.

\begin{proposition}
\label{e-regularity} \bigskip Suppose that $L$ is a smooth Lagrangian
Hamiltonian stationary manifold in $B_{1}(0)$ with $\partial L\cap
B_{1}(0)=\emptyset$. Then, there exists an $\varepsilon_{0}$ such that if
$r_{0}\leq1$ and
\[
\int_{B_{r_{0}}(0)\cap L}\left\vert A\right\vert ^{n}<\varepsilon_{0}
\]
then for all $0<\sigma\leq r_{0}$ and $y\in B_{r_{0}-\sigma}$
\[
\sigma^{2}\left\vert A(y)\right\vert ^{2}\leq\left(  \frac{\pi}{24}\right)
^{2}.
\]

\end{proposition}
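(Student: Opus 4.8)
The plan is to mimic the Choi--Schoen contradiction/blow-up argument, using Proposition \ref{r1} as the substitute for the Simons' identity step. First I would set up the standard point-selection scheme: suppose the conclusion fails, so there is a sequence of Hamiltonian stationary Lagrangian $L_j$ in $B_1(0)$ with $\int_{B_{r_j}\cap L_j}|A|^n<\varepsilon_0$ (for $\varepsilon_0\to 0$, or for a fixed small $\varepsilon_0$ to be chosen) but a radius $\sigma_j\le r_j$ and a point $y_j\in B_{r_j-\sigma_j}$ with $\sigma_j^2|A(y_j)|^2>(\pi/24)^2$. Consider the function $x\mapsto (r_j-|x-\,\cdot\,|)^2|A(x)|^2$, or more precisely work with $f_j(x)=(r_j-|x|)^2\sup_{L_j\cap B_1}|A|^2$ restricted appropriately, and pick a maximum point $x_j$ of $\operatorname{dist}(x,\partial B_{r_j})^2|A|^2$ over $\overline{B_{r_j}}\cap L_j$; call the maximum value $\ge (\pi/24)^2$. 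Set $\lambda_j=|A(x_j)|$ and $\rho_j=\tfrac12\operatorname{dist}(x_j,\partial B_{r_j})$; then on $B_{\rho_j}(x_j)\cap L_j$ one has the standard doubling bound $|A|\le 2\lambda_j$, and $\lambda_j\rho_j\ge \pi/48$, so in particular $\lambda_j\to\infty$ (since on a fixed compact region $|A|$ controlled by the small $L^n$ curvature via the coming estimate would be bounded).

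Next I would rescale: define $\tilde L_j=\lambda_j(L_j-x_j)$, so that on the ball of radius $\lambda_j\rho_j\ge\pi/48$ centered at $0$ we have $|\tilde A|\le 2$ and $|\tilde A(0)|=1$. These rescaled submanifolds are still Hamiltonian stationary Lagrangian (scaling invariance of \eqref{hseq}), still immersed with $0\in\tilde L_j$, and now have uniformly bounded second fundamental form on a ball of definite size. Now invoke Proposition \ref{r1} with $C=2$: passing to the embedded connected component $\Sigma_j$ of the rescaled submanifold through $0$, after a Lewy--Yuan rotation of coordinates, $\Sigma_j$ is a gradient graph of a potential $\bar u_j$ over a ball $B_{r_1}^n(0)$ of uniform radius $r_1=r_1(2)$ with $\|D^4\bar u_j\|_{C^\alpha(B_{r_1})}\le C_4(\alpha,2)$. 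The total extrinsic curvature of $\tilde L_j$ on this ball is $\int|\tilde A|^n=\int_{L_j\cap B_{\rho_j}(x_j)}|A|^n\le\int_{B_{r_j}\cap L_j}|A|^n\le\varepsilon_0$, so (if we take $\varepsilon_0$ small, or letting $\varepsilon_0\to 0$ along the sequence) $\int_{B_{r_1}}|\tilde A_j|^n\to 0$.

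Then I would extract a convergent subsequence: from the uniform $C^{4,\alpha}$ bound on $\bar u_j$, Arzel\`a--Ascoli gives $\bar u_j\to\bar u_\infty$ in $C^4_{\rm loc}(B_{r_1})$ (after also controlling $C^0$ via the normalization that the graph passes through $0$ with bounded Hessian, as in the proof of Proposition \ref{r1}), with $\bar u_\infty$ still solving the (rotated) Hamiltonian stationary equation and, by the $C^4$ convergence, $\int_{B_{r_1}}|A_{\bar u_\infty}|^n=\lim_j\int_{B_{r_1}}|\tilde A_j|^n=0$. Hence $A_{\bar u_\infty}\equiv 0$ on $B_{r_1}$, so the limit graph is a flat $n$-plane and $|A_{\bar u_\infty}(0)|=0$. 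But $C^4$ (indeed $C^2$) convergence forces $|\tilde A_j(0)|\to|A_{\bar u_\infty}(0)|=0$, contradicting $|\tilde A_j(0)|=1$. This contradiction shows the desired estimate holds for some $\varepsilon_0=\varepsilon_0(n)$; a final bookkeeping step converts the ``$\operatorname{dist}(\cdot,\partial B_{r_j})^2|A|^2$ has max $\le(\pi/24)^2$'' statement into the stated ``$\sigma^2|A(y)|^2\le(\pi/24)^2$ for all $y\in B_{r_0-\sigma}$, $\sigma\le r_0$'' by the usual observation that $y\in B_{r_0-\sigma}$ implies $\operatorname{dist}(y,\partial B_{r_0})\ge\sigma$.

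The main obstacle is the point-selection/doubling bookkeeping together with making sure the rescaled immersions genuinely have a uniform-size embedded graphical piece through $0$: one must check that after rescaling the hypothesis $\partial L_j\cap B_{\rho_0}^{2n}=\emptyset$ of Lemma \ref{lemma1} and Proposition \ref{r1} is satisfied (which follows from $\lambda_j\rho_j$ being bounded below by a definite constant larger than $\rho_0(2)=\pi/24$, so one may need to shrink the working ball or adjust constants), and that the $C^0$ normalization needed to upgrade $C^{4,\alpha}$ bounds to actual subconvergence is in force — exactly the normalization $u(0)=0$, $Du(0)=0$ used in Proposition \ref{r1}. Everything else is the standard Choi--Schoen machine, with Proposition \ref{r1} doing the work that Simons' identity does in the minimal case.
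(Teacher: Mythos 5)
Your proposal follows the same Choi--Schoen point-selection scheme as the paper (pick a maximum of $\mathrm{dist}(\cdot,\partial B_{r_0})^2|A|^2$, rescale by $|A(x_0)|$, apply Proposition~\ref{r1}), but the endgame diverges. The paper's endgame is a direct quantitative argument: the $C^{4,\alpha}$ bound from Proposition~\ref{r1} (with $C=4$) gives a gradient bound $\|\nabla\tilde A\|_{C^0}\le C_5$, hence $|\tilde A|>\tfrac12$ on $B_{1/(2C_5)}(0)$, hence $\int|\tilde A|^n\ge 1/(4^nC_5^n)$, and taking $\varepsilon_0=1/(4^nC_5^n)$ gives an explicit constant and an immediate contradiction for a single $L$. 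Your endgame is a soft compactness/blow-up argument over a sequence $L_j$ with $\varepsilon_0\to 0$: extract a $C^4_{\mathrm{loc}}$-convergent subsequence of graphing potentials, note the limit is totally geodesic since $\int|\tilde A_j|^n\to 0$, and contradict $|\tilde A_j(0)|=1$. This is valid and standard, but yields a non-explicit $\varepsilon_0$ and uses Arzel\`a--Ascoli where the paper needs only a mean-value estimate.

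There is one concrete slip you should fix. You invoke Proposition~\ref{r1} with $C=2$ and assert that the hypothesis ``follows from $\lambda_j\rho_j$ being bounded below by a definite constant larger than $\rho_0(2)=\pi/24$.'' That is exactly the inequality that fails: the point-selection only gives $\lambda_j\rho_j\ge\pi/48<\pi/24=\rho_0(2)$, so the ball on which the rescaled curvature bound holds is too small for Lemma~\ref{lemma1}/Proposition~\ref{r1} with $C=2$. The remedy is the one the paper takes implicitly: use the weaker bound $|\tilde A|\le 4$ (which of course follows from $|\tilde A|\le 2$) so that $\rho_0(4)=\pi/48$ matches the radius available, and then read off $r_1(4)$, $C_4(4)$. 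This also explains why the paper deliberately records ``$|\tilde A|\le 4$'' rather than the sharper $\le 2$. With that constant adjustment, and with the (unnecessary, and as written circular) remark that $\lambda_j\to\infty$ simply deleted, your argument closes.
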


\begin{proof}
Without loss of generality let $r_{0}=1$. We will deduce the general case by
rescaling at the end. Consider the nonnegative function $\left(  1-|x|\right)
^{2}|A(x)|^{2}. $ This function attains its maximum somewhere inside
$B_{1}(0)$, say at $x_{0}$. We assume the maximum is positive, otherwise the
result is trivial. Thus
\[
\left(  1-|x|\right)  ^{2}|A(x)|^{2}\leq\left(  1-|x_{0}|\right)  ^{2}%
|A(x_{0})|^{2}
\]
in particular, for $x\in B_{\frac{1-\left\vert x_{0}\right\vert }{2}}(x_{0})$
\begin{align*}
|A(x)|^{2}  &  \leq\frac{\left(  1-|x_{0}|\right)  ^{2}}{\left(  1-|x|\right)
^{2}}|A(x_{0})|^{2}\\
&  \leq\frac{\left(  1-|x_{0}|\right)  ^{2}}{\left(  \frac{1-\left\vert
x_{0}\right\vert }{2}\right)  ^{2}}|A(x_{0})|^{2}\\
&  =4|A(x_{0})|^{2}.
\end{align*}

Rescaling the graph over the ball $B_{\frac{1-\left\vert x_{0}\right\vert }%
{2}}(x_{0})$ by $|A(x_{0})|$, we get a Hamiltonian stationary manifold on a
ball of radius
\[
R_{0}=\frac{1-|x_{0}|}{2}\,|A(x_{0})|
\]
such that the second fundamental form $\tilde{A}$ satisfies
\[
|\tilde{A}(0)|=1\, \,\hbox{and} \,\,\, |\tilde{A}|\leq4.
\]

First, we suppose that (this will be contradicted)
\[
R_{0}>\frac{\pi}{48}%
\]
and
\[
\int_{B_{r_{0}}(0)\cap L}\left\vert A\right\vert ^{n}<\varepsilon_{0}.
\]
We have a Hamiltonian stationary Lagrangian submanifold on a ball of radius
$\frac{\pi}{48}$ with $| \tilde{A}| \leq4$. It follows that there is an
interior ball of radius $r_{1}(4)$ (from Lemma \ref{r1}) such that $L$ is
represented as the gradient graph of a function with
\begin{align*}
\left\Vert D^{2}u\right\Vert _{C^{\alpha}(B_{r_{1}})}  &  \leq\tan\frac{\pi
}{3},\\
\left\Vert D^{4}u\right\Vert _{C^{\alpha}(B_{r_{1}})}  &  \leq C_{4}(4).
\end{align*}
In particular, we have
\[
\left\|  \nabla\tilde{A}\right\|  _{C^{0}(B_{r_{1}})}\leq C_{5}.
\]
Therefore, as $|\tilde{A} (0)|=1$ we have
\[
| \tilde{A} | >\frac{1}{2} \,\,\,\,\,\hbox{on $B_{\frac{1}{2C_{5}}}(0)$.}
\]
Then integration leads to
\[
\int_{B_{\frac{1}{2C_{5}}}(0)}|\tilde{A}|^{n}\geq\left(  \frac{1}{2C_{5}%
}\right)  ^{n}\left(  \frac{1}{2}\right)  ^{n}=\frac{1}{4^{n}C_{5}^{n}}.
\]
Take
\[
\varepsilon_{0}=\frac{1}{4^{n}C_{5}^{n}}.
\]
So we have
\[
\int_{B_{\frac{1}{2C_{5}}}(0)}|\tilde{A}|^{n}\geq\varepsilon_{0}%
\]
which contradicts, by the scaling invariance of the total curvature, the
assumption
\[
\int_{B_{1}(0)}\left\vert A\right\vert ^{n}<\varepsilon_{0}.
\]
So we reject our assumption that $R_{0}>\frac{\pi}{48}$ and conclude that
\[
R_{0}\leq\frac{\pi}{48}.
\]
In this case, we have
\[
\frac{1-|x_{0}|}{2}|A(x_{0})|\leq\frac{\pi}{48}%
\]
which in turn implies
\[
\left(  1-|x|\right)  ^{2}|A(x)|^{2}\leq\left(  1-|x_{0}|\right)  ^{2}%
|A(x_{0})|^{2}\leq\left(  \frac{\pi}{24}\right)  ^{2}.
\]
It follows that, for $\left\vert x\right\vert \leq r$ we have%
\[
|A(x)|^{2}\leq\frac{1}{\left(  1-r\right)  ^{2}}\left(  \frac{\pi}{24}\right)
^{2}.
\]

Now suppose $r_{0}<1$. Rescaling the manifold by a factor of $\frac{1}{r_{0}}$
the first condition still holds, and we obtain
\[
r_{0}^{2}\left\vert A(x)\right\vert ^{2}=\left|  \tilde{A}\left(  \frac
{x}{r_{0}} \right)  \right|  ^{2}\leq\frac{1}{\left(  1-\frac{r}{r_{0}%
}\right)  ^{2}}\left(  \frac{\pi}{24}\right)  ^{2}.
\]
That is
\[
\left\vert A(x)\right\vert ^{2}\leq\frac{1}{\left(  r_{0}-r\right)  ^{2}%
}\left(  \frac{\pi}{24}\right)  ^{2}
\]
which is the conclusion.
\end{proof}

\section{Extension of Hamiltonian stationary Lagrangians across a small set}

\subsection{Extending Hamiltonian stationary sets under volume constrains}

The following extendibility result will be used in the proof of Theorem
\ref{conv_thm} to conclude the limiting varifold of a sequence of smooth
Hamiltonians stationary Lagrangian immersions is Hamiltonian stationary
including singular points; there, in fact we will only need the special case
that the singular set is of finitely many points.

\begin{theorem}
\label{ext_thm} \label{N} Let $N=\cup_{\alpha=1}^{\alpha_{0}}N_{\alpha}$ be a
finite union of compact sets $N_{\alpha}$ in a domain $\Omega\subset
{\mathbb{C}}^{n}$ where each $N_{\alpha}$ has finite $k_{\alpha}$-dimensional
Hausdorff measure with $k_{\alpha}\leq n-2$ and satisfies the local
$k_{\alpha}$-noncollapsing property
\begin{equation}
\inf_{x\in N_{\alpha}}{\mathscr H}^{k_{\alpha}}(N_{\alpha}\cap B_{\varepsilon
}(x))\geq C_{3}\varepsilon^{k_{\alpha}} \label{noncollapse}%
\end{equation}
for all $\varepsilon\in(0,\delta)$ for some $\delta$ and a constant $C_{3}>0$
independent of $\varepsilon$. Let $L$ be an immersed Lagrangian submanifold in
$\Omega\backslash N$ with $\overline{L}\backslash L\subseteq N$ such that
$(L,\mu_{L})$ is Hamiltonian stationary in $\Omega\backslash N$, where
$\mu_{L}={\mathscr H}^{n}\llcorner\, \beta$ is the measure on $L$ and $\beta$
is an $\mathbb{N}$-valued ${\mathscr H}^{n}$-integrable function on $L$. Assume

\begin{enumerate}
\item[(i)] $\displaystyle\int_{\Omega}|{{H}}|^{n}\hbox{d}\mu_{L}<C_{1}$, where
$H$ is the generalized mean curvature vector of $(L,\mu_{L})$;

\item[(ii)] There exists a positive constant $C_{4}$ such that for any open
set $E\subseteq L$
\[
\mu_{L}(E)\leq C_{4}\mathscr H^{n}(E);
\]

\item[(iii)] There exists a decreasing sequence $\varepsilon_{i}\rightarrow0$
such that
\[
\mathscr{H}^{n}(L\cap B_{\varepsilon_{i}}(y))<C_{2}\,\varepsilon
_{i}^{k_{\alpha}+\frac{n}{n-1}}%
\]
for all $y\in N_{\alpha}$ with $C_{2}$ independent of $y$.
\end{enumerate}

Then the closure $\overline{L}$ of $L$ is Hamiltonian stationary in $\Omega
$\emph{:} $\overline{L}$ admits a generalized mean curvature $\mathcal{H}$ in
$\Omega$ such that for any $f\in C_{0}^{\infty}(\Omega)$ it holds
\[
\int_{\Omega}\langle J\hbox{D}f,{\mathcal{H}}\rangle\ d\mu_{\overline{L}}=0.
\]

\end{theorem}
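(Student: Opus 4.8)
The plan is to prove the extension result by a cutoff argument: given $f \in C_0^\infty(\Omega)$, we want to show $\int_\Omega \langle J\mathrm{D}f, H\rangle\, d\mu_L = 0$, knowing this already holds for test functions supported away from $N$. Since $N = \cup_\alpha N_\alpha$ is a finite union, it suffices to handle one $N_\alpha$ at a time with Hausdorff dimension $k = k_\alpha \le n-2$; choose cutoffs $\zeta_\varepsilon$ that vanish near $N_\alpha$, equal $1$ outside a small neighborhood, and satisfy $|\mathrm{D}\zeta_\varepsilon| \lesssim 1/\varepsilon$ on the annular region $A_\varepsilon(y)$ of points within distance $\varepsilon$ of $N_\alpha$. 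Testing the Hamiltonian stationarity of $L$ in $\Omega\backslash N$ against $\zeta_\varepsilon f$ gives
\[
\int_\Omega \zeta_\varepsilon \langle J\mathrm{D}f, H\rangle\, d\mu_L = -\int_\Omega \langle J(f\,\mathrm{D}\zeta_\varepsilon), H\rangle\, d\mu_L.
\]
The left side converges to $\int_\Omega \langle J\mathrm{D}f, H\rangle\, d\mu_L$ by dominated convergence, using $H \in L^n(\mu_L)$ from assumption (i) together with $\mu_L(N) = 0$ (which follows since $\mathscr{H}^n(N_\alpha) = 0$ as $k_\alpha < n$, combined with the density bound (ii)). So everything reduces to showing the error term on the right tends to zero.

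The key step is the estimate on the error term. By Hölder with exponents $n$ and $n/(n-1)$,
\[
\left| \int_{A_\varepsilon} \langle J(f\,\mathrm{D}\zeta_\varepsilon), H\rangle\, d\mu_L \right| \le \|f\|_\infty \left(\int_{A_\varepsilon} |H|^n\, d\mu_L\right)^{1/n} \left(\int_{A_\varepsilon} |\mathrm{D}\zeta_\varepsilon|^{\frac{n}{n-1}}\, d\mu_L\right)^{\frac{n-1}{n}}.
\]
The first factor tends to zero as $\varepsilon \to 0$ by (i) and $\mu_L(N)=0$, so it suffices to show the second factor stays bounded. Using $|\mathrm{D}\zeta_\varepsilon| \lesssim 1/\varepsilon$, this reduces to showing $\varepsilon^{-n/(n-1)}\mu_L(A_\varepsilon) \le C$. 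To bound $\mu_L(A_\varepsilon)$, cover $N_\alpha$ efficiently: by the noncollapsing hypothesis \eqref{noncollapse}, a maximal $\varepsilon$-separated subset of $N_\alpha$ has cardinality at most $\sim C_3^{-1}\,\mathscr{H}^{k_\alpha}(N_\alpha)\,\varepsilon^{-k_\alpha}$, and the balls $B_{2\varepsilon}(y_j)$ centered at these points cover $A_\varepsilon$. Then
\[
\mu_L(A_\varepsilon) \le \sum_j \mu_L(B_{2\varepsilon}(y_j)) \le C_4 \sum_j \mathscr{H}^n(L \cap B_{2\varepsilon}(y_j)) \le C_4\, C_2\, (2\varepsilon)^{k_\alpha + \frac{n}{n-1}} \cdot \#\{y_j\} \lesssim \varepsilon^{\frac{n}{n-1}},
\]
using assumption (ii) and then (iii) along the sequence $\varepsilon_i$. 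Hence $\varepsilon_i^{-n/(n-1)}\mu_L(A_{\varepsilon_i}) \le C$, and the second Hölder factor is bounded along the sequence $\varepsilon_i$, completing the argument.

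The main obstacle is the bookkeeping in the covering estimate: the noncollapsing lower bound is needed precisely to control the \emph{number} of balls in the cover (without it, $N_\alpha$ could be ``diffuse'' and the cover could have too many balls), and the exponent $k_\alpha + n/(n-1)$ in hypothesis (iii) is exactly tuned so that (number of balls)$\times$(volume per ball)$\times \varepsilon^{-n/(n-1)}$ stays bounded — one must check these exponents balance. A secondary point is that \eqref{varform} was stated for integral rectifiable varifolds, so one should first verify that $(\overline{L}, \mu_L)$ is itself such a varifold in $\Omega$ with generalized mean curvature $\mathcal{H}$: the above argument in fact shows the first variation of $(\overline{L},\mu_L)$ extends across $N$ and is represented by $H \in L^n \subset L^1_{loc}$, so by Allard's theory $\overline{L}$ has a well-defined generalized mean curvature $\mathcal{H} = H$ $\mu_L$-a.e., and the identity $\int_\Omega \langle J\mathrm{D}f, \mathcal{H}\rangle\, d\mu_{\overline{L}} = 0$ follows. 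One should also note $\mu_{\overline{L}} = \mu_L$ since $\mu_L(N)=0$.
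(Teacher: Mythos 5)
Your proposal is correct and follows essentially the same strategy as the paper: establish the volume bound $\mu_L(U_\varepsilon)\lesssim\varepsilon^{n/(n-1)}$ via the noncollapsing lower bound on $N_\alpha$ plus hypothesis (iii), then run a cutoff/H\"older argument to show the error term from $J(f\,\mathrm{D}\zeta_\varepsilon)$ vanishes along $\varepsilon_i\to 0$. The only cosmetic differences are that the paper separates the argument into first establishing (via the same cutoff estimate, applied to arbitrary $C^1$ vector fields rather than Hamiltonian ones) that $\overline{L}$ admits a generalized mean curvature $\mathcal{H}$ in all of $\Omega$, and only then verifying \eqref{varform}; and the paper applies H\"older to $|H|\cdot 1$ with the gradient factor pulled out as $\varepsilon^{-1}$, whereas you apply H\"older to $|H|\cdot|\mathrm{D}\zeta_\varepsilon|$ directly — both routes hit the same exponent balance, and your bookkeeping is equivalent.
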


\begin{proof}
Define the $\varepsilon$-neighborhood of the compact set $N_{\alpha}$ by
\[
U_{\varepsilon}^{\alpha}=\{x\in{\mathbb{R}}^{2n}:\min_{y\in N_{\alpha}%
}|x-y|<\varepsilon\}.
\]
Then
\[
U_{\varepsilon}=\bigcup_{\alpha=1}^{\alpha_{0}}U_{\varepsilon}^{\alpha}%
\]
is the $\varepsilon$-neighborhood of $N$. Since $N$ is compact, we may assume
$U_{\varepsilon}$ is contained in the open domain $\Omega$ by choosing
$\varepsilon$ small. For simplicity of notations, we will assume (iii) holds
for $3\varepsilon_{i}$'s.

\medskip

\textit{\textbf{Step 1.} Volume estimate of $L\cap U_{\varepsilon_{j}}$.}

\vspace{.15cm}

For any fixed large $j$, let $\{B_{\varepsilon_{j}}(x^{\alpha}_{1}%
),...,B_{\varepsilon_{j}}(x^{\alpha}_{{\ell}(\varepsilon_{j})})\}$ be the
maximal family of disjoint balls in $\Omega\subset\mathbb{R}^{2n}$ centered at
$x^{\alpha}_{i}\in N_{\alpha}$ of radius $\varepsilon_{j}$. Compactness of
$N_{\alpha}$ ensures the number $\ell_{\alpha}(\varepsilon_{j})$ well defined.
The maximality assumption then implies
\[
N_{\alpha}\subseteq\bigcup_{i=1}^{\ell_{\alpha}(\varepsilon_{j})}%
B_{2\varepsilon_{j}}(x^{\alpha}_{i}).
\]
To estimate $\ell_{\alpha}(\varepsilon_{j})$, summing the $k_{\alpha}%
$-dimensional Hausdorff measures over the disjoint balls and using the local
$k_{\alpha}$-noncollapsing assumption \eqref{noncollapse}, we have
\[
\ell_{\alpha}(\varepsilon_{j})C_{3} \varepsilon_{j}^{k_{\alpha}}\leq\sum
_{i=1}^{\ell_{\alpha}(\varepsilon_{j})}\mathscr{H}^{k_{\alpha}}(N_{\alpha}\cap
B_{\varepsilon_{j}}(x^{\alpha}_{i}))\leq\mathscr{H}^{k_{\alpha}}(N_{\alpha})
\]
Therefore
\[
\ell_{\alpha}(\varepsilon_{j})\leq\frac{\mathscr{H}^{k_{\alpha}}(N)}{C_{3}
\,\varepsilon_{j}^{k_{\alpha}}}.
\]

Next, we claim
\[
U_{\varepsilon_{j}}^{\alpha}\subset\bigcup_{i=1}^{\ell_{\alpha}(\varepsilon
_{j})}B_{3\varepsilon_{j}}(x_{i}^{\alpha}).
\]
This can be seen from that for any point $p\in U_{\varepsilon_{j}}^{\alpha}$
there is a point $q\in N_{\alpha}$ with $|p-q|\leq\varepsilon_{j}$ and $q\in
B_{2\varepsilon_{j}}(x_{i}^{\alpha})$ for some $i$, and it follows $p\in
B_{\varepsilon_{j}}(x_{i}^{\alpha})$. Now by the assumptions (ii) and (iii),
\begin{align}
\int_{U_{\varepsilon_{j}}}\hbox{d}\mu_{L}  &  \leq\sum_{\alpha=1}^{\alpha_{0}%
}\int_{U_{\varepsilon_{j}}^{\alpha}}\hbox{d}\mu_{L}\nonumber\\
&  \leq\sum_{\alpha=1}^{\alpha_{0}}\sum_{i=1}^{\ell_{\alpha}(\varepsilon_{j}%
)}\int_{B_{3\varepsilon_{j}}(x_{i}^{\alpha})}\hbox{d}\mu_{L}\nonumber\\
&  \leq\sum_{\alpha=1}^{\alpha_{0}}\ell_{\alpha}(\varepsilon_{j})\,C_{4}%
C_{2}\,(3\varepsilon_{j})^{k_{\alpha}+\frac{n}{n-1}}\label{volume estimate}\\
&  \leq\sum_{\alpha=1}^{\alpha_{0}}\frac{\mathscr{H}^{k_{\alpha}}(N_{\alpha}%
)}{C_{3}}\,C_{4}C_{2}\,3^{k_{\alpha}+\frac{n}{n-1}}\,\varepsilon_{j}^{\frac
{n}{n-1}}\nonumber\\
&  =C_{5}(N)\,\varepsilon_{j}^{\frac{n}{n-1}}.\nonumber
\end{align}

\textit{\textbf{Step 2.} Existence of the generalized mean curvature
$\mathcal{H}$ of $\overline{L}$ in $\Omega$. }

\vspace{.15cm}

Let $X$ be an arbitrary $C^{1}$ vector field on $\Omega$ with compact support.
Our goal is to verify \cite[Definition 16.5]{Simon}
\begin{equation}
\int_{\Omega}\mbox{div}_{\overline{L}}X\, \mbox{d}\mu_{\overline L}%
=-\int_{\Omega}\langle\mathcal{H},X\rangle\,\mbox{d}\mu_{\overline L}
\label{globalbar}%
\end{equation}
for some locally $\mu_{\overline L}$ -integrable $\mathbb{R}^{2n}$-valued
function $\mathcal{H}$ on $\overline{L}$.

Let $\phi_{\varepsilon_{j}}$ be a cut-off function satisfying
\begin{equation}
\left\{
\,\,\begin{aligned} &\phi_{\varepsilon_{j}}=0\,\,\,\hbox{on $U_{\varepsilon_j/2}$} \nonumber \\ &\phi_{\varepsilon_{j}}=1\,\,\,\hbox{on $\Omega\backslash U_{\varepsilon_j}$} \nonumber \\ &0\leq\phi_{\varepsilon_j} \leq1 \nonumber \\ & |D\phi_{\varepsilon_j}|<C/{\varepsilon_{j}}. \nonumber \end{aligned}\right.
\end{equation}
The existence of such $\phi_{\varepsilon_{j}}$ is given, for example, in Lemma
2.2 in \cite{Harvey-Polking} and is also due to Bochner \cite{Bochner}. Then
$\phi_{\varepsilon_{j}}X$ is a $C^{1}$ vector field which vanishes on
$U_{{\varepsilon_{j}}/2}$. By the standard first variation formula, we have%

\begin{align}
\int_{\Omega}  &  \langle H,\phi_{\varepsilon_{j}}X\rangle\ \hbox{d}\mu_{L}
=-\int_{\Omega}\mbox{div}_{L}(\phi_{\varepsilon_{j}}X) \ \hbox{d}\mu
_{L}\label{general H}\\
&  =-\int_{\Omega} \left\{  \langle\nabla\phi_{\varepsilon_{j}},X\rangle+
\phi_{\varepsilon_{j}}\mbox{div}_{L}X\right\}  \ \hbox{d}\mu_{L}.\nonumber
\end{align}
From the volume estimate \eqref{volume estimate},
\[
\left\vert \int_{\Omega}\langle\nabla\phi_{\varepsilon_{j}},X\rangle
\ \hbox{d}\mu_{L}\right\vert \leq C(X)\, {\varepsilon_{j}}^{-1}\int%
_{U_{{\varepsilon_{j}}}\backslash U_{{\varepsilon_{j}}/2}} \hbox{d}\mu
_{L}\rightarrow0.
\]
Now letting ${\varepsilon_{j}}\rightarrow0$ in \eqref{general H}
\begin{equation}
\int_{\Omega}\langle H,X\rangle\ \hbox{d}\mu_{L}=-\int_{\Omega} \mbox{div}_{L}%
X\ \hbox{d}\mu_{L}. \label{global}%
\end{equation}

By assumption, $\overline{L}\backslash L\subseteq N$ and $\mathscr{H}^{k}%
(N)<+\infty$ and $k\leq n-2,$ we have $\mathscr{H}^{n}\left(  \overline
{L}\backslash L\right)  =0$. So $\overline{L}=L\cup(\overline{L}\backslash L)$
is a rectifiable $n$-varifold. The divergence operator $\mbox{div}_{\overline
{L}}$ is defined as $\mbox{div}_{L}$, by noting that $\overline{L}\backslash
L$ has zero measure (cf. \cite[16.2]{Simon}). Then by \eqref{global}
\begin{align}
\int_{\Omega}\mbox{div}_{\overline{L}}X\ \hbox{d}\mu_{\overline L}  &
=\int_{\Omega}\hbox{div}\, X \ \hbox{d}\mu_{L}\nonumber\\
&  =-\int_{\Omega}\langle{H},X\mathcal{\rangle} \ \hbox{d}\mu_{L}%
\label{global2}\\
&  =-\int_{\Omega}\langle\mathcal{H},X\mathcal{\rangle}\ \hbox{d}\mu
_{\overline L}\nonumber
\end{align}
where $\mathcal{H}$ equals ${H}$ on $L$ and zero on $\overline{L}\backslash
L$, so it is locally $\mu_{L}$-integrable on $\overline{L}$, in turn
$\mathcal{H}$ is the generalized mean curvature of $\overline{L}$ in $\Omega$
since $X$ is arbitrary.

\vspace{0.2cm}

\textit{\textbf{Step 3.} $\overline{L}$ is Hamiltonian stationary in $\Omega
$.}

\vspace{.15cm}

Our goal is to show that
\begin{equation}
\int_{\Omega}\langle J\hbox{D}f,{\mathcal{H}}\rangle\,\hbox{d}\mu
_{\overline{L}}=0\label{globalHSdone}%
\end{equation}
for all $f\in C_{0}^{\infty}\left(  \Omega\right)  $. For any smooth function
$f$ with compact support in $\Omega$, $J\hbox{D}(\phi_{\varepsilon_{j}}f)$ is
a Hamiltonian vector field on $\Omega$ with compact support, in particular it
vanishes on $U_{{\varepsilon_{j}}/2}$ containing $N$. Applying \eqref{global2}
with $X=J\nabla f$, we see
\begin{align}
\int_{\Omega}\langle J\hbox{D}f,{\mathcal{H}}\rangle\,\hbox{d}\mu_{L} &
=\int_{L}\langle J\nabla f,{H}\rangle\,\hbox{d}\mu_{L}\nonumber\\
&  =\int_{L\cap U_{\varepsilon_{j}}}\langle J\nabla f,{H}\rangle
\,\hbox{d}\mu_{L}+\int_{L\backslash U_{\varepsilon_{j}}}\langle J\nabla
f,{H}\rangle\,\hbox{d}\mu_{L}.\label{stationary2show}%
\end{align}

Since $L$ is Hamiltonian stationary in $\Omega\backslash N$, we have%
\begin{align}
\left\vert \int_{L\backslash U_{\varepsilon_{j}}}\langle J\nabla f,{H}%
\rangle\,\hbox{d}\mu_{L}\right\vert  &  =\left\vert \int_{L}\langle
J\nabla(\phi_{\varepsilon_{j}}f),H\rangle\,\hbox{d}\mu_{L}-\int_{L\cap
U_{\varepsilon_{j}}}\langle J\nabla(\phi_{\varepsilon_{j}}f),H\rangle
\,\hbox{d}\mu_{L}\right\vert \nonumber\\
&  =\left\vert \,0-\int_{L\cap(U_{\varepsilon_{j}}\backslash U_{{\varepsilon
_{j}}/2})}\left(  \langle\phi_{\varepsilon_{j}}J\nabla f,H\rangle+\langle
fJ\nabla\phi_{\varepsilon_{j}},H\rangle\right)  \,\hbox{d}\mu_{L}\right\vert
\nonumber\\
&  \leq C(f)(1+{\varepsilon_{j}}^{-1})\int_{L\cap(U_{\varepsilon_{j}}\setminus
U_{{\varepsilon_{j}}/2})}|{H}|\,\hbox{d}\mu_{L}\nonumber\\
&  \leq C(f)(1+{\varepsilon_{j}}^{-1})\left(  \int_{L\cap(U_{\varepsilon_{j}%
}\backslash U_{{\varepsilon_{j}}/2})}|{H}|^{n}\,\hbox{d}\mu_{L}\right)
^{\frac{1}{n}}\left(  \int_{U_{\varepsilon_{j}}\backslash U_{{\varepsilon_{j}%
}/2}}\hbox{d}\mu_{L}\right)  ^{\frac{n-1}{n}}\label{bound2zero1}%
\end{align}
by H\"{o}lder's inequality, where $C(f)$ depends on $f$ and $|\hbox{D}f|$ as
$\nabla f$ is the tangential projection of $\hbox{D}f$ along $L$ so
\[
|J\nabla f|=|\nabla f|\leq|\hbox{D}f|.
\]
Similarly
\begin{equation}
\left\vert \int_{L\cap U_{\varepsilon_{j}}}\langle J\nabla f,{H}%
\rangle\,\hbox{d}\mu_{L}\right\vert \leq C(f)\left(  \int_{L\cap
U_{\varepsilon_{j}}}|{H}|^{n}\,\hbox{d}\mu_{L}\right)  ^{\frac{1}{n}}\left(
\int_{U_{{\varepsilon_{j}}}}\hbox{d}\mu_{L}\right)  ^{\frac{n-1}{n}%
}\label{bound2zero2}%
\end{equation}

It then follows from the assumption (i), and the volume estimate
\eqref{volume estimate} that both terms (\ref{bound2zero1}) and
(\ref{bound2zero2}) vanish as ${\varepsilon_{j}}\rightarrow0$. Combining with
(\ref{stationary2show}) we conclude (\ref{globalHSdone}).
\end{proof}

The local $k$-noncollapsing property is automatically satisfied if $N$ is a
compact manifold of dimension no larger than $n-2$.

\begin{corollary}
\label{removable-manifold} Let $N$ be a compact submanifold in a domain
$\Omega\subset{\mathbb{R}}^{2n}$ of dimension $k\leq n-2$. Let $L$ be
Hamiltonian stationary in $\Omega\backslash N$ as in Theorem \ref{ext_thm}
with (i) and (ii) therein. Then $\overline{L}$ is Hamiltonian stationary in
$\Omega$.
\end{corollary}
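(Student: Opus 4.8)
The plan is to deduce Corollary \ref{removable-manifold} from Theorem \ref{ext_thm} by checking that its hypotheses (i) and (ii) plus the structural assumptions on $N$ automatically imply conditions \eqref{noncollapse} and (iii). Since $N$ is taken to be a single compact submanifold of dimension $k\le n-2$, we set $\alpha_0=1$, $N_1=N$, $k_1=k$. Condition (i) is assumed verbatim, and (ii) is assumed verbatim (with $\beta$ the $\mathbb N$-valued multiplicity from the immersion, $\mu_L=\mathscr H^n\llcorner\beta$), so there is nothing to prove for those. It remains to produce \eqref{noncollapse} and (iii).

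First I would verify the local $k$-noncollapsing property \eqref{noncollapse}. Because $N$ is a compact embedded $C^1$ (indeed smooth) submanifold of dimension $k$, around each point $x\in N$ there is a coordinate chart in which $N$ is a graph over its tangent space with Lipschitz constant as small as we like on a ball of a fixed radius $r_N>0$ (uniform in $x$ by compactness). Hence for $\varepsilon<\delta:=r_N$ the intersection $N\cap B_\varepsilon(x)$ contains the graph over a $k$-ball of radius comparable to $\varepsilon$, so $\mathscr H^k(N\cap B_\varepsilon(x))\ge c_n\,\varepsilon^k$ for a constant depending only on $n$ (or on a modulus of continuity of the embedding); this gives \eqref{noncollapse} with $C_3=c_n$. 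Equivalently one may quote the standard density lower bound $\Theta^k(\mathscr H^k\llcorner N,x)=1$ for points of a $C^1$ $k$-manifold, which by compactness is uniform at the scale $\delta$.

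Next I would establish the volume bound (iii), namely the existence of a decreasing sequence $\varepsilon_i\to0$ with $\mathscr H^n(L\cap B_{\varepsilon_i}(y))<C_2\,\varepsilon_i^{k+\frac{n}{n-1}}$ for all $y\in N$. The exponent needed is $k+\frac{n}{n-1}$; since $k\le n-2$ we have $k+\frac{n}{n-1}\le n-2+\frac{n}{n-1}<n$, so this is a statement that the $n$-dimensional volume of $L$ near $N$ decays faster than the "expected" rate $\varepsilon^n$ along \emph{some} sequence of radii. The key point is that $N$ has $\mathscr H^n$-measure zero (as $k<n$), hence $\mathscr H^n(L\cap U_\varepsilon)\to0$ as $\varepsilon\to0$ where $U_\varepsilon$ is the $\varepsilon$-tube around $N$; combining with the covering of $U_\varepsilon$ by $\ell(\varepsilon)\lesssim \varepsilon^{-k}$ balls $B_{3\varepsilon}(x_i)$, $x_i\in N$ (exactly as in Step 1 of the proof of Theorem \ref{ext_thm}), one gets $\sum_i \mathscr H^n(L\cap B_{3\varepsilon}(x_i)) \ge (\text{something})$; a pigeonhole/averaging argument over a dyadic sequence $\varepsilon_i=2^{-i}$ then forces $\mathscr H^n(L\cap B_{\varepsilon_i}(y))$ to be small for \emph{some} subsequence of radii, at the rate $o(\varepsilon_i^{k})$, which is enough to beat $\varepsilon_i^{k+\frac{n}{n-1}}$ after absorbing the gap $\frac{n}{n-1}<2$ into the decay. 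Once \eqref{noncollapse} and (iii) are in hand, Theorem \ref{ext_thm} applies directly and yields that $\overline L$ admits a generalized mean curvature $\mathcal H$ in $\Omega$ with $\int_\Omega\langle J\mathrm Df,\mathcal H\rangle\,\mathrm d\mu_{\overline L}=0$ for all $f\in C_0^\infty(\Omega)$, i.e.\ $\overline L$ is Hamiltonian stationary in $\Omega$.

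The main obstacle I expect is making the passage from "$\mathscr H^n(L\cap U_\varepsilon)\to0$" to the \emph{pointwise} per-ball bound (iii) at the required power of $\varepsilon$ along a subsequence: one must choose the sequence $\varepsilon_i$ carefully so that it works \emph{simultaneously for all} $y\in N$, and extract the polynomial rate rather than mere $o(1)$. Here the slack $k\le n-2$ (so that $k+\frac{n}{n-1}$ is strictly below $n$, indeed below $n-1+1=n$) is exactly what provides room: a covering argument shows $\mathscr H^n(L\cap B_{\varepsilon}(y))\le \mathscr H^n(L\cap U_{2\varepsilon})$ and one then needs $\mathscr H^n(L\cap U_{2\varepsilon})=o(\varepsilon^{k})$, which holds for \emph{every} $\varepsilon\to0$ since $U_{2\varepsilon}$ shrinks to the $\mathscr H^n$-null set $N$ — but to get the explicit power $\varepsilon^{k+n/(n-1)}$ one instead argues that if (iii) failed for all small $\varepsilon$ simultaneously near some $y$, then summing over the $\sim\varepsilon^{-k}$ covering balls would give $\mathscr H^n(L\cap U_{c\varepsilon})\gtrsim \varepsilon^{n/(n-1)}$, i.e.\ bounded below by a positive power, contradicting $\mathscr H^n(L\cap U_\varepsilon)\to0$ only if one is careful about multiplicity of the covering; the Besicovitch covering lemma controls the overlap and closes this gap. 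Everything else is a direct citation of Theorem \ref{ext_thm}.
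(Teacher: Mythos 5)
Your verification of the local $k$-noncollapsing property \eqref{noncollapse} is fine and is exactly the content of the paper's one-line remark preceding the corollary. The genuine gap is in your derivation of condition (iii), and the argument you sketch does not close it. First, the claimed contradiction is not one: if every ball $B_{\varepsilon}(y)$, $y\in N$, had $\mathscr H^{n}(L\cap B_{\varepsilon}(y))\geq C_{2}\varepsilon^{k+\frac{n}{n-1}}$, then summing over the $\lesssim\varepsilon^{-k}$ covering balls gives only $\mathscr H^{n}(L\cap U_{c\varepsilon})\gtrsim\varepsilon^{\frac{n}{n-1}}$, which tends to $0$ and is therefore perfectly consistent with $\mathscr H^{n}(L\cap U_{\varepsilon})\to0$; a positive power of $\varepsilon$ is not ``bounded below.'' Second, even where pigeonhole-type reasoning applies it can only show that \emph{most} balls in a cover satisfy a bound, whereas (iii) demands the estimate at \emph{every} $y\in N_{\alpha}$ for a common sequence $\varepsilon_{i}$. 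Third, your starting point $\mathscr H^{n}(L\cap U_{\varepsilon})\to0$ already presupposes $\mathscr H^{n}(L\cap U_{\varepsilon_{0}})<\infty$ for some $\varepsilon_{0}$, i.e.\ local mass finiteness of $L$ near $N$, which is not among the hypotheses (i)--(ii) and is itself part of what must be proved.

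The tool that actually yields (iii) here is the monotonicity formula, packaged in the paper as Proposition \ref{points}: for a varifold with generalized mean curvature in $L^{n}$ one gets the pointwise bound $\mu(B_{\rho}(y))\leq C\left(\left\vert \ln\rho\right\vert +1\right)^{n}\rho^{n}$ for \emph{all} small $\rho$ and all $y$, with $C$ controlled by $\mu(B_{\rho_{0}}(y))$ and $\Vert\mathcal H\Vert_{L^{n}}$, hence uniform over the compact set $N$; since $k+\frac{n}{n-1}<n$ whenever $k\leq n-2$ and $n\geq3$ (the case $n=2$, $k=0$ being handled by the Kuwert--Sch\"atzle inequality quoted there), this beats the required power $\rho^{k+\frac{n}{n-1}}$ for every small $\rho$, not merely along a subsequence. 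Your proposal never invokes monotonicity, and without it I do not see how to extract the quantitative rate in (iii) from the purely measure-theoretic fact that $N$ is $\mathscr H^{n}$-null.
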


\begin{corollary}
With the assumptions on $N$ and (i), (ii) as in Theorem \ref{ext_thm}, let
$\iota:M\rightarrow\Omega\backslash N$ be a proper immersion of an
$n$-dimensional manifold $M$ in $\Omega\backslash N$ and $L=\iota(M)$ is
Hamiltonian stationary Lagrangian in $\Omega\backslash N$. Then $\overline{L}$
is Hamiltonian stationary Lagrangian in $\Omega$.
\end{corollary}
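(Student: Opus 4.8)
The plan is to reduce the Lagrangian statement to the non-Lagrangian Theorem~\ref{ext_thm} applied to the varifold associated to the immersion $\iota$, and then to check that the two extra pieces of structure we need---namely the Lagrangian condition and the specific Hamiltonian stationarity variational identity---pass to the closure. First I would set $\mu_L$ to be the weighted Hausdorff measure obtained by pushing forward the induced volume measure under $\iota$, i.e. $\mu_L = \mathscr{H}^n \llcorner \beta$ with $\beta(y) = \#\{\iota^{-1}(y)\}$ on the embedded locus; by Proposition~\ref{locallyembedded} and the analyticity/unique continuation statements in Proposition~\ref{L=M}, $\beta$ is $\mathbb{N}$-valued and $\mathscr{H}^n$-integrable on $L$, and on the dense embedded set $\mathcal{E}$ the immersion looks locally like $m(y)$ disjoint copies of the same embedded Lagrangian graph. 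Since $L$ is assumed Hamiltonian stationary Lagrangian in $\Omega\backslash N$, the computation in Proposition~\ref{L=M}(1) shows the generalized mean curvature $H$ of $(L,\mu_L)$ equals $J\nabla\theta$ on each embedded component, so the hypotheses (i), (ii) of Theorem~\ref{ext_thm} on $(L,\mu_L)$ are exactly the ones assumed here (hypothesis (iii) is already part of the ``assumptions on $N$'' being invoked). Thus Theorem~\ref{ext_thm} applies verbatim and yields a generalized mean curvature $\mathcal{H}$ of $\overline L$ on $\Omega$ with $\int_\Omega \langle J\hbox{D}f,\mathcal{H}\rangle\,d\mu_{\overline L} = 0$ for all $f\in C_0^\infty(\Omega)$.

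It then remains to see that $\overline L$ is a \emph{Lagrangian} varifold in the sense of Definition~\ref{HsLV}, i.e. that $\mathscr{H}^n$-a.e. approximate tangent space $T_x\overline L$ is a Lagrangian $n$-plane. Here I would argue that $\overline L\backslash L \subseteq N$ has $\mathscr{H}^n$-measure zero (since $\mathscr{H}^{k_\alpha}(N_\alpha)<\infty$ with $k_\alpha\le n-2<n$), so $\mu_{\overline L}$-a.e. point lies on $L$, where the approximate tangent space coincides with the honest tangent plane of the smooth immersed Lagrangian $L$, hence is Lagrangian. Therefore every approximate tangent space of $\overline L$ (up to a $\mu_{\overline L}$-null set, which is all Definition~\ref{HsLV} requires) is Lagrangian. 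Combined with the variational identity from the previous paragraph, $\overline L$ is a Hamiltonian stationary Lagrangian $n$-varifold in $\Omega$, which is the assertion. In fact, since $\iota$ is a proper immersion of the $n$-manifold $M$ and the only new points are in $N$, one can also note that $\overline L$ is still ``properly immersed'' away from $N$ in the sense of Definition~\ref{piHsL} on $\Omega\backslash N$, but the corollary as stated only claims Hamiltonian stationarity of the varifold $\overline L$, so this stronger remark is not needed.

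The one point requiring a little care---and the step I expect to be the main obstacle---is verifying that hypothesis (ii) of Theorem~\ref{ext_thm}, $\mu_L(E)\le C_4\mathscr{H}^n(E)$ for open $E\subseteq L$, genuinely holds for the push-forward measure of a general (possibly highly non-injective) immersion, rather than just for an embedding. The resolution is exactly the multiplicity bound: by Proposition~\ref{canonical} (or directly from the upper semicontinuity of $y\mapsto\#\iota^{-1}(y)$ for a proper immersion of a compact manifold, as used in that proof), the multiplicity function $\beta$ is bounded above by a finite constant $m_{\max}=\max_{x\in M}\#\iota^{-1}(\iota(x))$, so $\mu_L = \mathscr{H}^n\llcorner\beta$ satisfies $\mu_L(E)\le m_{\max}\mathscr{H}^n(E)$ and one may take $C_4=m_{\max}$. (For the manifold case of Corollary~\ref{removable-manifold} this is even cleaner, as $\beta\equiv 1$ on the embedded model there.) Once (ii) is in hand, nothing else in Theorem~\ref{ext_thm} sees the Lagrangian or immersed structure---the volume estimate in Step~1, the existence of $\mathcal{H}$ in Step~2, and the vanishing of the boundary terms in Step~3 all go through unchanged---so the corollary follows.
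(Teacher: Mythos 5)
The paper's entire proof of this corollary consists of verifying the one hypothesis of Theorem~\ref{ext_thm} that the corollary's statement does not hand you for free, namely that $\overline{L}\backslash L\subseteq N$. That inclusion is a stated hypothesis of Theorem~\ref{ext_thm} (``Let $L$ be an immersed Lagrangian submanifold in $\Omega\backslash N$ with $\overline{L}\backslash L\subseteq N$ \dots''), and it is not automatic for an arbitrary immersion; the point of the corollary is that it \emph{is} automatic for a \emph{proper} immersion. Your proposal never proves this. You simply invoke it twice as a given fact---once when you write ``$\overline L\backslash L\subseteq N$ has $\mathscr{H}^n$-measure zero'' and once when you say ``the only new points are in $N$''---so Theorem~\ref{ext_thm} is being applied without one of its hypotheses being established. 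The missing argument is short but essential: if $y\in\overline{L}\backslash L$ and $y\notin N$, then by compactness of $N$ there is a neighborhood $W$ of $y$ with $\overline{W}\cap N=\emptyset$, so $\overline{W}\subset\Omega\backslash N$ and $\iota^{-1}(\overline{W})$ is compact by properness; any sequence $y_j\in L\cap W$ converging to $y$ has preimages with a subsequential limit $x\in M$, and $\iota(x)=y$ forces $y\in L$, a contradiction.

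Most of the rest of your proposal is aimed at things that are either already in the hypotheses or immediate. You flag hypothesis (ii) as ``the main obstacle'' and propose to derive $\mu_L(E)\le C_4\,\mathscr{H}^n(E)$ from a uniform multiplicity bound $m_{\max}=\max_M\#\iota^{-1}(\iota(x))$; but (ii) is explicitly \emph{assumed} in the corollary (``With the assumptions on $N$ and (i), (ii) \dots''), so there is nothing to prove there. Moreover, the bound you propose does not hold in the generality of this corollary: $M$ is only properly immersed in the non-compact set $\Omega\backslash N$, so $M$ need not be compact and the multiplicity function $\#\iota^{-1}(\iota(\cdot))$ need not be bounded---the argument via Proposition~\ref{canonical} you cite is for a compact source manifold. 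Your observation that the Lagrangian condition on approximate tangent planes passes to $\overline{L}$ because $\overline{L}\backslash L$ is $\mathscr{H}^n$-null is correct and is the kind of remark the paper leaves implicit; but it relies precisely on the unproved inclusion $\overline{L}\backslash L\subseteq N$, which is the one thing your argument cannot skip.
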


\begin{proof}
In light of Theorem \ref{ext_thm}, the only thing to verify is: $\overline
{L}\backslash L\subseteq N$. For any $y\in\overline{L}\backslash L$, if
$y\not \in N$ then by compactness of $N$ there will be a neighborhood $W$ of
$y$ such that $\overline{W}\cap N=\emptyset$; then there exists a sequence
$y_{j}\in\overline{W}\cap L\rightarrow y$. By properness of $\iota$, it
follows that $\iota^{-1}(\{y_{j}:j\in\mathbb{N}\})$ contains a converging
subsequence in $M$ since $\iota^{-1}(\overline{W})$ is compact in $M$; then
$y$ is the image of the limit point which is in $L$, and we have a contradiction.
\end{proof}

\subsection{Volume estimate via the monotonicity formula}

The following volume upper estimate is a direct consequence of the standard
monotonicity formula for volumes. In particular, it implies that the
assumption (iii) in Theorem \ref{ext_thm} holds when $N$ is a finite set of
points ($k=0$) and $H\in L^{n}$ when we take the Radon measure $\mu$ induced
by $\mathscr H^{k}$ (or an finite integral multiple of $\mathscr H^{k}).$

\begin{proposition}
\label{points} Let $L$ be an integral $n$-rectifiable varifold in
$\mathbb{R}^{n+l}$, with generalized mean curvature $\mathcal{H}$ in
$L^{n}(L,\mu)$ where $\mu$ is the Radon measure associated with $L$. Then
\[
\mu(B_{\rho}(x))\leq C\left(  \left\vert \ln\rho\right\vert +1\right)
^{n}\rho^{n}.
\]
In particular when $n\geq2$, for any $0\leq k\leq n-2$ it holds for small
$\rho$
\[
\mu(B_{\rho}(x))\leq C\rho^{k+\frac{n}{n-1}}.
\]

\end{proposition}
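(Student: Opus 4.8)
The plan is to deduce everything from the standard monotonicity formula for varifolds with generalized mean curvature in $L^n$ (see \cite[\S 17]{Simon}). First I would recall that for an integral $n$-rectifiable varifold $(L,\mu)$ with generalized mean curvature $\mathcal H$, the function
\[
\rho\mapsto e^{\Lambda(\rho)}\,\frac{\mu(B_\rho(x))}{\omega_n\rho^n}
\]
is monotone nondecreasing on $(0,R)$, where $\Lambda(\rho)=\tfrac1n\int_0^\rho s^{-1}\|\mathcal H\|_{L^n(B_s(x),\mu)}\,\mathrm ds$ — more precisely one uses the differential inequality
\[
\frac{d}{d\rho}\!\left(\frac{\mu(B_\rho(x))}{\rho^n}\right)\ \ge\ -\frac{1}{n\rho^n}\int_{B_\rho(x)}|\mathcal H|\,\mathrm d\mu\ \ge\ -\frac{1}{n\rho^{n}}\,\|\mathcal H\|_{L^n(B_\rho,\mu)}\,\mu(B_\rho(x))^{\frac{n-1}{n}},
\]
obtained by integrating by parts the first variation identity against a radial cutoff and applying H\"older's inequality to the $\int|\mathcal H|$ term.

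Next I would turn this into an explicit bound for $g(\rho):=\mu(B_\rho(x))$. Writing $C_0:=\|\mathcal H\|_{L^n(L,\mu)}^{}$, the inequality above reads $\frac{d}{d\rho}\!\big(\rho^{-n}g\big)\ge -C_0\,\rho^{-n}g^{(n-1)/n}$; substituting $h(\rho):=\rho^{-n}g(\rho)$ (so $g^{(n-1)/n}=\rho^{n-1}h^{(n-1)/n}$) gives $h'(\rho)\ge -C_0\,\rho^{-1}h(\rho)^{(n-1)/n}$, i.e. $\big(h^{1/n}\big)'\ge -\tfrac{C_0}{n}\,\rho^{-1}$. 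Integrating from $\rho$ up to a fixed radius $R\le 1$ (on which $\mu$ is finite, say $\mu(B_R(x))\le M$) yields
\[
h(\rho)^{1/n}\ \le\ h(R)^{1/n}+\frac{C_0}{n}\,\ln\frac{R}{\rho}\ \le\ M^{1/n}R^{-1}+\frac{C_0}{n}\big(|\ln\rho|+|\ln R|\big),
\]
so $h(\rho)\le C\,(|\ln\rho|+1)^n$ with $C$ depending only on $n$, $C_0$, $M$, $R$. Multiplying through by $\rho^n$ gives the first claimed estimate $\mu(B_\rho(x))\le C(|\ln\rho|+1)^n\rho^n$.

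For the second statement, when $n\ge2$ and $0\le k\le n-2$, I note that $\rho^n=\rho^{k+\frac{n}{n-1}}\cdot\rho^{\,n-k-\frac{n}{n-1}}$ and that the exponent $n-k-\frac{n}{n-1}\ge n-(n-2)-\frac{n}{n-1}=2-\frac{n}{n-1}=\frac{n-2}{n-1}\ge0$ is strictly positive for $n\ge 3$ (and one checks the $n=2$, $k=0$ case directly, where $n-k-\frac{n}{n-1}=0$ and one instead needs $(|\ln\rho|+1)^2\rho^0$ absorbed — here $\frac{n}{n-1}=2=n$, so the claimed bound is just the first bound for small $\rho$). For $n\ge 3$, since $(|\ln\rho|+1)^n\rho^{\,n-k-\frac{n}{n-1}}\to0$ as $\rho\to0$, it is bounded by a constant for $\rho$ small, and the desired inequality $\mu(B_\rho(x))\le C\rho^{k+\frac{n}{n-1}}$ follows.

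The only genuine obstacle is making the monotonicity-with-$L^n$-mean-curvature step precise: one must be slightly careful that the relevant integral $\int_0^\rho s^{-1}\|\mathcal H\|_{L^n(B_s)}\,\mathrm ds$ is finite (it is, since $\|\mathcal H\|_{L^n(B_s)}\to0$ as $s\to0$ by absolute continuity, so the integrand is $o(s^{-1})$ near $0$ only in an averaged sense — in fact the cleaner route, used above, avoids this by differentiating $\rho^{-n}g$ directly and integrating the resulting ODE inequality for $h^{1/n}$, which needs only $\|\mathcal H\|_{L^n(L,\mu)}<\infty$). Everything else is the standard first-variation computation with a radial cutoff function and an application of H\"older, exactly as in \cite[Theorem 17.6]{Simon}.
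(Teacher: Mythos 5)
Your derivation of the logarithmic bound $\mu(B_\rho(x))\le C(|\ln\rho|+1)^n\rho^n$ follows the paper's argument essentially verbatim (same monotonicity formula, same substitution yielding a linear ODE inequality for $h^{1/n}$, same integration), and your power-counting observation that $n-k-\tfrac{n}{n-1}>0$ for $n\ge 3$ correctly disposes of that case.

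The gap is in the case $n=2$, $k=0$, which you flag but do not actually resolve. There the exponent $n-k-\tfrac{n}{n-1}$ vanishes, so the claim to be proven is $\mu(B_\rho(x))\le C\rho^{2}$, which is \emph{strictly stronger} than the first bound $\mu(B_\rho(x))\le C(|\ln\rho|+1)^{2}\rho^{2}$: the extra factor $(|\ln\rho|+1)^2$ is unbounded as $\rho\to 0$, so it cannot simply be ``absorbed.'' Your remark that ``the claimed bound is just the first bound for small $\rho$'' has the implication backwards. The paper handles this case by invoking a genuinely sharper, log-free area estimate for $2$-varifolds with $L^2$ mean curvature, namely \cite[(A.6)]{KSc} (see also \cite{Simon93}),
\[
\rho^{-2}\mu(B_{\rho}(x))\le C\rho_0^{-2}\mu(B_{\rho_0}(x))+C\int_{B_{\rho_0}(x)}|\mathcal H|^{2}\,\mathrm d\mu,
\]
which gives $\mu(B_\rho(x))\le C\rho^2$ directly. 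You would need to supply such an estimate (or an equivalent device) to close the $n=2$ case; the H\"older-plus-integration route you use does not, by itself, remove the logarithm.
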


\begin{proof}
Recall the monotonicity formula \cite[17.3 p. 84]{Simon}
\begin{align}
\frac{d}{d\rho}\left(  \rho^{-n}\mu(B_{\rho}(x))\right)   &  =\frac{d}{d\rho
}\int_{B_{\rho}(x)}\frac{|\hbox{D}^{\perp}r|^{2}}{r^{n}}\mbox{d}\mu
+\rho^{-1-n}\int_{B_{\rho}(x)}\langle y-x,\mathcal{H}\rangle\mbox{d}\mu
\label{monotonicity}\\
&  \geq\rho^{-1-n}\int_{B_{\rho}(x)}\langle y-x,\mathcal{H}\rangle
\mbox{d}\mu\nonumber\\
&  \geq-\rho^{-1-n}\int_{B_{\rho}(x)}\rho\left\vert \mathcal{H}\right\vert
\mbox{d}\mu\nonumber\\
&  \geq-\rho^{-n}\left(  \int_{B_{\rho}(x)}\left\vert \mathcal{H}\right\vert
^{n}\mbox{d}\mu\right)  ^{1/n}\mu(B_{r}(x))^{\frac{n-1}{n}}.\nonumber
\end{align}
Now let
\[
w(\rho)=\frac{\mu(B_{\rho}(x))^{1/n}}{\rho}%
\]
in which case we have
\[
\frac{d}{d\rho}\left[  w(\rho)\right]  ^{n}\geq-\frac{1}{\rho}\left(
\int_{B_{\rho}(x)}\left\vert \mathcal{H}\right\vert ^{n}\mbox{d}\mu\right)
^{1/n}w^{n-1}%
\]
and
\begin{align*}
nw^{n-1}\frac{d}{d\rho}w &  \geq-\frac{1}{\rho}\left(  \int_{B_{\rho}%
(x)}\left\vert \mathcal{H}\right\vert ^{n}\mbox{d}\mu\right)  ^{1/n}w^{n-1}\\
\frac{d}{d\rho}w &  \geq-\frac{1}{\rho n}\left(  \int_{B_{\rho}(x)}\left\vert
\mathcal{H}\right\vert ^{n}\mbox{d}\mu\right)  ^{1/n}.
\end{align*}
Integrating over $(\rho,\rho_{0})$,%
\[
w(\rho_{0})-w(\rho)\geq\left(  \int_{B_{\rho}(x)}\left\vert \mathcal{H}%
\right\vert ^{n}\mbox{d}\mu\right)  ^{1/n}\frac{1}{n}\left[  \ln\rho-\ln
\rho_{0}\right]
\]
that is
\[
w(\rho)\leq w(\rho_{0})+\left(  \int_{B_{\rho}(x)}\left\vert \mathcal{H}%
\right\vert ^{n}\mbox{d}\mu\right)  ^{1/n}\frac{1}{n}\left(  -\ln\rho+\ln
\rho_{0}\right)
\]
or
\[
\frac{\mu(B_{\rho}(x))}{\rho^{n}}\leq\left\{  \mu(B_{\rho_{0}}(x))+\left(
\int_{B_{\rho}(x)}\left\vert \mathcal{H}\right\vert ^{n}\mbox{d}\mu\right)
^{1/n}\frac{1}{n}\left(  \left\vert \ln\rho\right\vert +\ln\rho_{0}\right)
\right\}  ^{n}%
\]
and finally%
\[
\mu(B_{\rho}(x))\leq\rho^{n}\left\{  \mu(B_{\rho_{0}}(x))+\left(
\int_{B_{\rho}(x)}\left\vert \mathcal{H}\right\vert ^{n}\mbox{d}\mu\right)
^{1/n}\frac{1}{n}\left(  \left\vert \ln\rho\right\vert +\ln\rho_{0}\right)
\right\}  ^{n}.
\]
In particular we have
\[
\rho^{-k-\frac{n}{n-1}}\mu(B_{\rho}(x))\leq\rho^{\frac{n(n-2)}{n-1}-k}\left\{
\mu(B_{\rho_{0}}(x))+\left(  \int_{B_{\rho}(x)}\left\vert \mathcal{H}%
\right\vert ^{n}\mbox{d}\mu\right)  ^{1/n}\frac{1}{n}\left(  \left\vert
\ln\rho\right\vert +\ln\rho_{0}\right)  \right\}  ^{n}%
\]
and the term on the right hand side tends to zero as $\rho\rightarrow0$ when
$n>2$, as $k\leq n-2$ by assumption; however, when $n=2$, this term becomes unbounded.

For $n=2$, $k$ must be 0, and the desired result follows from \cite[(A.6)]%
{KSc} (cf. \cite{Simon93}): for any $0<\rho<\rho_{0}$,
\[
\rho^{-2}\mu(B_{\rho}(x))\leq C\rho_{0}^{-2}\mu(B_{\rho_{0}}(x)) + C
\int_{B_{\rho_{0}}(x)} |\mathcal{H} |^{2} \mbox{d}\mu<\infty.
\]

\end{proof}

\section{Sequential convergence of Hamiltonian stationary Lagrangians}

Convergence of a sequence of embedded manifolds in $C^{k}$ topology has been
used in \cite{CSc} and then in \cite{An} and recently in \cite{Sharp} via
local graphical representations of the manifolds. Along the same lines, we
write down a definition of $C^{k}$ convergence of manifolds to a varifold that
will be sufficient for our purposes.


\begin{definition}
\label{def1} \emph{Let $\left\{  S_{j}\right\}  $ be a sequence of finite sets
of embedded $n$-dimensional submanifolds $\left\{  \Sigma_{j,i}\right\}  $ in
an open subset $U$ of $\mathbb{R}^{2n},$ where $S_{j}=\left\{  \Sigma
_{j,1},...,\Sigma_{j,m}\right\}  $ \ for some positive integer $m$. Suppose
that for each $i\in\left\{  1,...,m\right\}  $ there is a point $x(i)$ and an
n-plane $P(i)$ containing $x(i)$ such that $\Sigma_{j,i}$ is a sequence of
graphs over $P(i).$ \ If (up to possible permutations of $\left\{
1,...,m\right\}  )$ each sequence of graphs converges uniformly in the $C^{k}$
topology to a graph $\Sigma_{\infty,i}$ over $P(i)$ we say that} $\left\{
S_{j}\right\}  $ converges uniformly in $C^{k}$ topology to the integral
varifold on $U$%
\[
V=\sum_{i=1}^{m}\Sigma_{\infty,i}%
\]
\emph{identifying each embedded submanifold $\Sigma_{\infty,i}$ with a
(multiplicity 1) integral varifold in the obvious way.}
\end{definition}

\begin{definition}
\label{def2} \emph{Given an open set $U$ in ${\mathbb{R}}^{2n}$, we say that}
a sequence of immersed submanifolds $L_{j}$ in $U$ converges uniformly to a
varifold $V$ in the $C^{k}$ topology in $U$, \emph{if for every point
$x\in\mbox{supp}\,(V)$ there is a neighborhood $U_{x}$ in $U$ such that
\[
S_{j}=\left\{  \hbox{embedded connected components of}\,\,L_{j}\cap
U_{x}\right\}
\]
converges uniformly in $C^{k}$ topology to $V$ restricted to $U_{x}$. }
\end{definition}

\begin{proof}
[Proof of Theorem 1.1]The case $n=1$ was discussed in the introduction. We now
assume $n\geq2$. First, we claim that the manifolds $L_{i}$ remain in a
bounded region in $\mathbb{C}^{n}$. Fixing an $L_{i},$ by the Wiener Covering
Lemma \cite[Lemma 4.1.1]{KrantzParks}, we may choose a finite collection of
balls $B_{1}(x_{k})$, for $x_{k}\in L_{i}$ that cover $L_{i}$ such that
$B_{1/3}(x_{k})$ are disjoint. Now for each $x_{k}$ either
\begin{equation}
\int_{B_{1/3}(x_{k})}\left\vert A\right\vert ^{n}<\varepsilon_{0} \label{altA}%
\end{equation}
or
\begin{equation}
\int_{B_{1/3}(x_{k})}\left\vert A\right\vert ^{n}\geq\varepsilon_{0}.
\label{altB}%
\end{equation}
In the first case, by Lemma \ref{e-regularity}, we have a uniform bound on the
curvature on $B_{1/6}(x_{k})$:
\[
\left\vert A\right\vert \leq\sqrt{\frac{3\pi}{2}.}%
\]
Lemma \ref{lemma1} then guarantees that there is a fixed minimum radius
\[
r_{1}=\frac{\sqrt{\pi}}{12}\sqrt{\frac{2}{3}}\cos\left(  \frac{\pi}%
{12}\right)
\]
such that a connected component of $L_{i}$ $\cap B_{r_{1}}(x_{k})$ is
graphical is over the tangent plane at $x_{k},$ which implies
\[
\hbox{Vol}\,(B_{1/6}(x_{k})\cap L_{i})\geq\omega_{n}r_{1}^{n}.
\]
It follows that the number of points $x_{k}$ for which (\ref{altA}) hold is
bounded by
\begin{equation}
\#\left\{  x_{k}:\int_{B_{1/3}(x_{k})}\left\vert A\right\vert ^{n}%
<\varepsilon_{0}\right\}  \leq\frac{C_{1}}{\omega_{n}r_{1}^{n}}.
\label{numberofballs}%
\end{equation}
On the other hand, it is clear that
\[
\#\left\{  x_{k}:\int_{B_{1/3}(x_{k})}\left\vert A\right\vert ^{n}%
\geq\varepsilon_{0}.\right\}  \leq\frac{C_{2}}{\varepsilon_{0}}.
\]
It follows that there are at most
\[
R_{0}=\frac{C_{1}}{\omega_{n}r_{1}^{n}}+\frac{C_{2}}{\varepsilon_{0}}%
\]
balls of radius $1$ in this cover. Immediately we conclude (recall $L_{i}$ are
connected):
\[
L_{i}\subset B_{R_{0}}(0)=\{x\in{\mathbb{R}}^{2n}:|x|\leq R_{0}\}.
\]

Next, define
\[
{\mathcal{C}}_{k}=\{B_{r_{k}}(y_{k,j})\}
\]
to be a finite cover of $B_{R_{0}}(0)$ by balls $B_{r_{k}}(y_{k,j})$ in
$\mathbb{R}^{2n}$, where $r_{k}=2^{-k}\varepsilon_{0}$ and $\varepsilon_{0}$
is the constant in Proposition \ref{e-regularity}, with the property that each
point in $B_{R_{0}}(0)$ is covered by at most $b$ balls in ${\mathcal{C}}_{k}$
and $\{B_{r_{k}/2}(y_{k,j})\}$ still covers $B_{R_{0}}(0)$. This can be done
with $b$ independent of $r_{k},y_{k,j}$, by Besicovitch's covering theorem
(cf. \cite[Theorem 4.2.1]{KrantzParks}). Now we observe
\[
\sum_{j}\int_{L_{i}\cap B_{r_{k}}(y_{k,j})}|A_{i}|^{n}\hbox{d}\mu_{i}\leq
b\int_{L_{i}}|A_{i}|^{n}\hbox{d}\mu_{i}\leq b\,C_{2}%
\]
where $L_{i}\cap B_{r_{k}}(y_{k,j})\not =\emptyset$. It then follows that for
each $i$ and each $k$ there are $J_{k}^{i}$ balls of radius $r_{k}$ such that
the integral of $|A_{i}|^{n}$ on each of these balls is not smaller than
$\varepsilon_{0}$, for an integer $J_{k}^{i}$ with $J_{k}^{i}\leq
b\,C/\varepsilon_{0}$. By reindexing, we may denote the centers of these balls
by $y_{k,j}(i)$ and the collection of balls as
\begin{equation}
\mathcal{B}_{k}(i)=\{B_{r_{k}}(y_{k,1}(i)),\cdots,B_{r_{k}}(y_{k,J_{k}^{i}%
}(i))\}. \label{set_of_bad_balls}%
\end{equation}
Letting
\[
J_{k}=\lim\sup_{i\rightarrow\infty}J_{k}^{i}\leq\frac{b\,C}{\varepsilon_{0}}%
\]
we may choose a subsequence $\left\{  L_{i}\right\}  $ (here and in the
sequel, we will use the same indices for subsequences for simplicity) such
that $J_{k}^{i}=J_{k}$ for all $i$. We may then assume, by switching to a
subsequence if necessary, the sequence $y_{k,j}(i)\rightarrow x_{k,j}$ as
$i\rightarrow\infty$ for each $1\leq j\leq J_{k}^{i}\leq b\,C_{2}%
/\varepsilon_{0}$. Next, letting
\[
J=\lim\sup_{k\rightarrow\infty}J_{k}\leq\frac{b\,C_{2}}{\varepsilon_{0}}%
\]
we may select a subsequence $\mathcal{K}\subset\mathbb{N}$ such that
$\left\vert \mathcal{K}\right\vert =\infty$ and $J_{k}^{i}=J$ for all $i$ and
$k\in\mathcal{K}$. By choosing yet another subsequence we further assume that
$x_{k,j}\rightarrow x_{j}$ for each $j=1,...,J$ as $k\in\mathcal{K}%
\rightarrow\infty$, and let $S=\{x_{1},...,x_{J}\},$ and $S$ may be empty.

We assume there is no subsequence of $\{L_{i}\}$ that converges to a single
point, otherwise we are done. We construct a sequence of nested open sets
\[
U_{0}\subset U_{1}...\subset B_{R_{0}}\backslash S
\]
such that
\[
\bigcup_{l} \,U_{l}=B_{R_{0}}\backslash S
\]
and show that there is a subsequence $\{L_{i}\}$ that converges in $C^{m}$ in
the sense of Definition \ref{def2}, uniformly on each $U_{l}$ to a Hamiltonian
stationary varifold.

Let $\tau_{0}>0$ be smaller than the minimum distance between points in $S$
and the minimum distance from points in $S$ to $\partial B_{R_{0}}$ and let
$\tau_{l+1}=3^{-l}\tau_{1}.$ \ For each $l$, choose $k=k(l)$ $\in\mathcal{K}$
so that
\begin{align}
\left\Vert x_{k,j}-x_{j}\right\Vert  &  <\frac{\tau_{l}}{4},\,\,\text{ for all
}j\in\left\{  1,...,J\right\} \label{k1}\\
r_{k}  &  <\frac{\tau_{l}}{8}. \label{k2}%
\end{align}
In particular the balls $B_{\tau_{l}/2}(x_{k,j})$ are disjoint and contained
in $B_{\tau_{l}}(x_{j})$ respectively. Let
\begin{equation}
U_{l}=B_{R_{0}}(0)\backslash\bigcup_{x_{j}\in S}\overline{B_{\tau_{l}}(x_{j}%
)}. \label{U_l}%
\end{equation}
For a fixed $l$, we may choose $i\geq i(l)$ large enough so that
\begin{equation}
\left\Vert y_{k,j}(i)-x_{k,j}\right\Vert <\frac{\tau_{l}}{4}. \label{i}%
\end{equation}
It then follows that
\[
U_{l}\subset B_{R_{0}}(0)\setminus\bigcup_{y_{k,j}\in\mathcal{B}_{k}%
(i)}B_{r_{k}}(y_{k,j}).
\]
In particular, for each $i$ the set $U_{l}$ is covered by the balls
${\mathcal{C}}_{k}\backslash\mathcal{B}_{k}$, recall (\ref{set_of_bad_balls}),
and $d(\overline{U_{l}},S)\geq3\tau_{l}/8$. Then, for a ball $B_{r_{k}%
}(y_{k,j})$ with $L_{i}\cap U_{l}\cap B_{r_{k}}(y_{k,j})\not =\emptyset$, we
conclude that $y_{k,j}\notin\mathcal{B}_{k}(i)$, thus
\[
\Vert A_{i}\Vert_{L^{n}(L_{i}\cap B_{r_{k}}(y_{k,j}))}^{n}<\varepsilon_{0}%
\]
and then we have a curvature bound
\begin{equation}
\left\Vert A\right\Vert (y)\leq\frac{3\times2^{k}}{\varepsilon_{0}}\frac{\pi
}{24}%
\end{equation}
for points $y\in$ $L_{i}\cap B_{2r_{k}/3}(y_{k,j})$. This must hold uniformly
at each point of $U_{l}$. Now consider the components of $L_{i}\cap U_{l}\cap
B_{r_{k}}(y_{k,j})$ that intersect $B_{r_{k}/2}(y_{k,j})$. There are a finite
number of these, by the same reasoning leading to \eqref{numberofballs}.
Applying Lemma \ref{lemma1}, that for any point in one of these components,
the manifold stays graphical over a ball in the tangent plane of radius
\[
\frac{2\varepsilon_{0}}{3\times2^{k}}\cos\frac{\pi}{12}>\frac{r_{k}}{2}%
\]
with Lagrangian potential $u$ satisfying%
\begin{equation}
\left\vert D^{2}u\right\vert \leq\tan\frac{\pi}{12}\,\,\text{ on }B_{r_{k+1}%
}(y_{k,j}). \label{grassman}%
\end{equation}

Every embedded connected component of $L_{i}\cap B_{r_{k}/2}(y_{k,j})$ is
contained in an embedded connected component of $L_{i}\cap B_{r_{k}}(y_{k,j}%
)$. We may choose a subsequence of $\left\{  L_{i}\right\}  $ so that for each
$j$ the number $m(y_{k,j})$ of components of $L_{i}\cap B_{r_{k}}(y_{k,j})$
that intersect $B_{r_{k+1}}(y_{k,j})$ with $y_{k,j}\not \in \mathcal{B}%
_{k}(i)$, is independent of $i$, again by the same reasoning leading to
\eqref{numberofballs}. For such chosen $L_{i}$, each embedded connected
component of $L_{i}\cap B_{r_{k+1}}(y_{k,j})$ is graphical over an $n$-plane
in the Lagrangian Grassman, so using \eqref{grassman} we may choose a further
subsequence such that each sequence of components remains graphical over a
fixed Lagrangian $n$-plane. The bound (\ref{grassman}) together with
Proposition \ref{r1} gives uniform $C^{m}$ bounds for each graphing function
for each positive integer $m$; by Arzel\`{a}-Ascoli theorem, the graphs
converge uniformly to a limit. We therefore conclude that $\{L_{i}\cap
U_{l}\}$ converges uniformly in $C^{m}$ to a varifold (or vacates $U_{l}$
completely) in the sense of Definition \ref{def2}, and the limit is locally
the sum of finitely many immersed submanifolds, possibly with multiplicity.
Because every compact set $K\subset B_{R_{0}}(0)\backslash S$ must eventually
be contained in some $U_{l}$ we see that $\left\{  L_{i}\right\}  $ converges
uniformly on $K$. The $C^{m}$ convergence also implies that each of these
limiting immersed submanifolds satisfies the Hamiltonian stationary equation
\eqref{hseq}, since by Proposition \ref{L=M} each graph satisfies the
\eqref{hseq}. Now, take a diagonal sequence $\{L_{i}\}$ to get a sequence
which converges on each open set $U_{l}$ in the $C^{m}$ topology to a
varifold, or vacates every $U_{l}$. By the definition of this limit, the
$n$-varifolds must be nested. In particular, the limit will be nonempty unless
a subsequence satisfies (as $L_{i}$ is connected) $L_{i}\subset B_{\tau_{l}%
}(x_{j})$ for arbitrary small $\tau_{l}$ and some point $x_{j}\in S$. We are
assuming that $\left\{  L_{i}\right\}  $ does not converge to a point, so we
conclude that the limit is a nonempty varifold on $B_{R_{0}}(0)\backslash S,$
and we call its support $L$.

So, the immersed submanifold $L$ is Hamiltonian stationary and Lagrangian in
$B_{R_{0}}\backslash S$, because $\theta_{i}$ is harmonic on each $L_{i}(y)$,
moreover $\left\Vert {H}\right\Vert _{L^{n}(U_{l})}\leq C$ for all $l,$ so
$H\in L^{n}\left(  B_{R_{0}}(0)\backslash S\right)  $ from the smooth
convergence. By Proposition \ref{points} and Theorem \ref{N} with $k=0$ and
noticing $\overline{L}\backslash L\subset S$, $\overline{L}$ is Hamiltonian
stationary in $\mathbb{R}^{2n}$.

The argument above works when $S$ is empty as well, in that case $U_{0}%
=U_{l}=B_{R_{0}}(0)$.


Finally, we show that $\overline{L}$ is connected. Suppose that $\overline{L}$
is disconnected. As $\overline{L}$ is closed and bounded, each component is
compact, and there will be a smooth function $\psi$ on $B_{R_{0}}(0)$ such
that
\[
\overline{L}\subset\psi^{-1}\left(  0\right)  \cup\psi^{-1}\left(  1\right)
\]
with nontrivial intersection in both level sets. $\ $\ Now take a sequence of
points $p_{i}\in$ $L_{i}$ such that $p_{i}\rightarrow p\in\bar{L}\cap\psi
^{-1}\left(  0\right)  $ and $q_{i}\in$ $L_{i}$ such that $q_{i}\rightarrow
q\in\bar{L}\cap\psi^{-1}\left(  1\right)  .$ There is a path
\begin{align*}
\gamma_{i\text{ }}  &  :[0,1]\rightarrow L_{i}\\
\gamma_{i\text{ }}(0)  &  =p_{i}\\
\gamma_{i\text{ }}(0)  &  =q_{i}.
\end{align*}
as $L_{i}$ is connected. For all values $\sigma\in\lbrack0,1]$ there will a
value $t_{i}\left(  s\right)  $ such that $\psi(\gamma_{i\text{ }}%
(t_{i}))=\sigma$. In particular, for each $\sigma\in\lbrack\frac{1}{3}%
,\frac{2}{3}]$, there is a sequence of points $z_{i}(\sigma)\in L_{i}$ with
$\psi(z_{i})=\sigma.$ \ There are clearly infinitely many sequences converging
to different limit points (the continuous function $\psi$ distinguishes the
limit points), thus for some value $\sigma$ we can choose a limit point
$z_{i}\rightarrow z$ with $z$ not in $S$, as $S$ is finite, and $\psi
(z)\in\lbrack\frac{1}{3},\frac{2}{3}]$. \ Now $z$ has a positive distance
$d_{0}$ to the finite set $S$ of singular points, so we conclude that for
$\tau_{l}<<d_{0}$ the $L_{i}$ converge smoothly near $z,$ thus $z\in\bar{L}$
which contradicts that $\psi(z)\in\lbrack\frac{1}{3},\frac{2}{3}].$
\end{proof}

\end{document}